\newtheorem*{thm*}{Theorem}
\newtheorem{thm}{Theorem}[section]
\newtheorem{lem}[thm]{Lemma}
\newtheorem{cor}[thm]{Corollary}
\newtheorem{prop}[thm]{Proposition}
\theoremstyle{remark}
\newtheorem*{rem*}{Remark}
\newcommand{\NN}{\mathbb{N}}
\newcommand{\ZZ}{\mathbb{Z}}
\newcommand{\QQ}{\mathbb{Q}}
\newcommand{\RR}{\mathbb{R}}
\newcommand{\dd}[1]{\mathop{\mathrm{d}#1}}
\NewDocumentCommand\e{ s O{} m }{%
	\IfBooleanTF{#1}{%
		\operatorname{e}_{#2}\parentheses*{#3}%
	}{\operatorname{e}_{#2}\parentheses{#3}}%
}
\newcommand{\LandauO}{O}
\DeclarePairedDelimiter\parentheses{\lparen}{\rparen}
\DeclarePairedDelimiter\braces{\lbrace}{\rbrace}
\DeclarePairedDelimiter\brackets{\lbrack}{\rbrack}
\DeclarePairedDelimiter\abs{\lvert}{\rvert}
\DeclarePairedDelimiter\floor{\lfloor}{\rfloor}
\DeclarePairedDelimiter\ceil{\lceil}{\rceil}
\DeclarePairedDelimiter\ropeninterval{\lbrack}{\rparen}
\DeclarePairedDelimiter\rcfrac{\llbracket}{\rrbracket} 
\NewDocumentCommand\set{ s o m o }{%
	\IfBooleanTF{#1}{\IfNoValueTF{#4}{\braces*{#3}}{\braces*{\,#3:#4\,}}}{%
	\IfNoValueTF{#2}{\IfNoValueTF{#4}{\braces{#3}}{\braces{\,#3:#4\,}}}{%
	\IfNoValueTF{#4}{\braces[#2]{#3}}{\braces[#2]{\,#3:#4\,}}}}%
}
\newcommand{\sawtooth}[1]{(\mkern -2mu(#1)\mkern -2mu)}%
\DeclareMathOperator{\inv}{inv}
\newcommand{\EAclassical}{\ensuremath{ \mathrm{EA}^{(\mathrm{sub})} }}
\newcommand{\EAdivision}{\ensuremath{ \mathrm{EA}^{(\mathrm{div})} }}
\newcommand{\EAbyexcess}{\ensuremath{ \mathrm{EA}^{(\mathrm{div})}_{(\textnormal{by-excess})} }}
\crefname{section}{§}{§§}
\crefname{figure}{Figure}{Figures}
\numberwithin{equation}{section}
	\renewcommand\p@subfigure{\thefigure~}
\newcounter{@ToDo}
\newcommand{\todo@helper}[1]{%
	({\color{blue}TODO~\arabic{@ToDo}: {#1\@addpunct{.}}})%
}
\newcommand{\todo}[1]{\stepcounter{@ToDo}%
	\relax\ifmmode\text{\todo@helper{#1}}%
	\else\todo@helper{#1}\fi%
}
\title[Bias in the Euclidean algorithm and a conjecture of Ito]{Bias in the number of steps in the Euclidean algorithm and a conjecture of Ito on Dedekind sums}
\date{\today{}}
\makeatletter\@namedef{subjclassname@2020}{\textup{2020} Mathematics Subject Classification}\makeatother
\subjclass[2020]{
	Primary
	11A55; 
	Secondary
	11F20, 
	11K50, 
	11J25.
}
\keywords{Euclidean algorithm, continued fraction, Dedekind sum, average, asymptotics}
\author{Paolo~Minelli}
\author{Athanasios~Sourmelidis}
\author{Marc~Technau}
\address{
	Paolo~Minelli \and Athanasios~Sourmelidis \and Marc~Technau\\%
	Institut für Analysis und Zahlentheorie\\%
	TU~Graz\\%
	Kopernikusgasse~24/II\\%
	8010~Graz\\%
	Austria}
\email{minelli@math.tugraz.at}
\email{sourmelidis@math.tugraz.at}
\email{mtechnau@math.tugraz.at}
\thanks{%
	PM is supported by the Austrian Science Fund (FWF), project~I-3466.
	AS is supported by FWF projects~Y-901 and~F-5512.
	MT is supported by the joint FWF--ANR project \emph{ArithRand} (FWF I 4945-N and ANR-20-CE91-0006).
}
\begin{document}
\begin{abstract}
	We investigate the number of steps taken by three variants of the Euclidean algorithm on average over Farey fractions.
	We show asymptotic formulae for these averages restricted to the interval $(0,1/2)$, establishing that they behave differently on $(0,1/2)$ than they do on $(1/2,1)$.
	These results are tightly linked with the distribution of lengths of certain continued fraction expansions as well as the distribution of the involved partial quotients.
	
	As an application, we prove a conjecture of Ito on the distribution of values of Dedekind sums.
	
	The main argument is based on earlier work of Zhabitskaya, Ustinov, Bykovski\u{\i} and others, ultimately dating back to Heilbronn, relating the quantities in question to counting solutions to a certain system of Diophantine inequalities.
	The above restriction to only half of the Farey fractions introduces additional complications.
\end{abstract}
\maketitle%
\section{Introduction}

\subsection{Euclidean algorithm (classical version)}
\label{sec:Euclid:Classical}

The Euclidean algorithm---ref\-er\-red to as `\EAclassical' in the sequel---for the computation of the greatest common divisor (gcd) of two positive integers $a$ and $b$, has been described as \emph{`the oldest non-trivial algorithm that has survived to the present day'} by Knuth~\cite[p.~318]{knuth1998art-of-programming-2}.
In its most basic form the algorithm proceeds by replacing the input tuple $(a,b)$ by $(a-b,b)$ if $a<b$ (`Case~A') and $(a,b-a)$ if $a\geq b$ (`Case~B') until one of the arguments becomes zero (`Case~C'), in which case the gcd of the original input is given by the other argument.
(There is some leeway in describing the algorithm and we shall choose what is convenient for our exposition rather than what is historically most accurate; the reader is referred to \emph{loc.~cit.}\ for a more detailed discussion of that matter.)
For instance, on the input $(11,3)$, the algorithm takes the following~$6$ steps:
\begin{equation}\label{eq:Example}
	\begin{aligned}
		(11,3) &
		\mapsto (8,3)
		\mapsto (5,3)
		\stackrel{*}{\mapsto} (2,3)
		\stackrel{*}{\mapsto} (2,1) \\ &
		\mapsto (1,1)
		\stackrel{*}{\mapsto} (\underline{1},0)
		\quad(\text{hence, }\gcd(11,3) = \underline{1}),
	\end{aligned}
\end{equation}
where the asterisks ($*$) mark the positions where the algorithm switches between cases.
Observe that the number $11/3$ has the continued fraction expansion
\begin{equation}\label{eq:ContFracExample}
	\frac{11}{3} = 3 + \cfrac{1}{ 1 + \cfrac{1}{ 2 } } \, .
\end{equation}
and~$6 = 3 + 1 + 2$ is the sum of the partial quotients herein.

If one modifies Case~A of \EAclassical{} as to replace $(a,b)$ by $(a-B,b)$, where $B$ is the largest multiple of $b$ not exceeding $a$, and modifies Case~B similarly, then the modified algorithm skips all steps ($\mapsto$) not marked with an asterisk in the above example; this amounts to precisely~$3$ steps which is also the number of partial quotients in the continued fraction expansion~\cref{eq:ContFracExample};
we shall refer to this version of \EAclassical{} by \EAdivision{}.

It is easy to see that the correspondence of number of steps on the input $(a,b)$ and properties of the continued fraction expansion
\begin{gather}\label{eq:OrdinaryContinuedFractionExpansion}
	\frac{a}{b}
	= [0; a_1, \ldots, a_n]
	\coloneqq 0 + \cfrac{1}{
		a_1 + \cfrac{1}{
			a_2 + \ldots \cfrac{}{
					\ldots + \cfrac{1}{
					a_n
				}
			}
		}
	} 
\end{gather}
of~$a/b\in[0,1)$ (where $n\in\NN_0$ and the so-called \emph{partial quotients} $a_1,a_2,\ldots,a_n$ are positive integers and $a_n\geq 2$) holds in general, \emph{i.e.},
\begin{itemize}
	\item the number of steps taken by \EAclassical{} when applied to $(a,b)$ (or any tuple $(ka,kb)$ with some positive integer $k$) is $a_1 + a_2 + \ldots + a_n$ (see \cref{fig:Euclid:Subtractive} for a plot of its behavior), and
	\item the number of steps taken by \EAdivision{} is $n$.
	We denote this number by $s(a/b)$. (See \cref{fig:Euclid:Division} for a plot of its behavior.)
\end{itemize}

\begin{figure}[!ht]
	\centering%
	\newlength{\subFigureWidth}
	\setlength{\subFigureWidth}{6.9cm}
	\begin{subfigure}[t]{\subFigureWidth}
		\centering%
		\includegraphics{./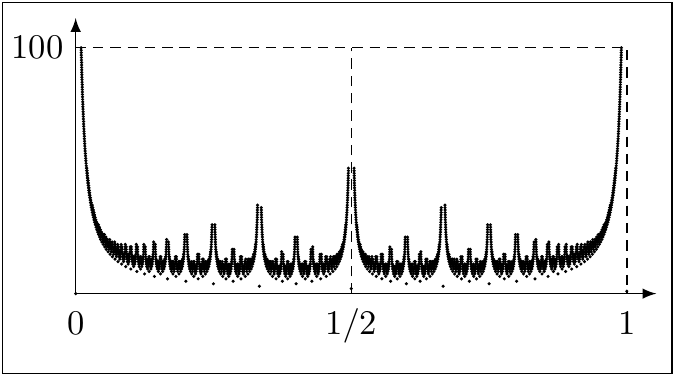}%
		\caption{%
			\EAclassical{}.
			The maximum number of steps occurs at $1/100$ and $99/100$ which have the continued fraction expansion $[0; 100]$ and $[0; 1, 99]$ respectively.
		}%
		\label{fig:Euclid:Subtractive}%
	\end{subfigure}%
	\qquad%
	\begin{subfigure}[t]{\subFigureWidth}%
		\centering%
		\includegraphics{./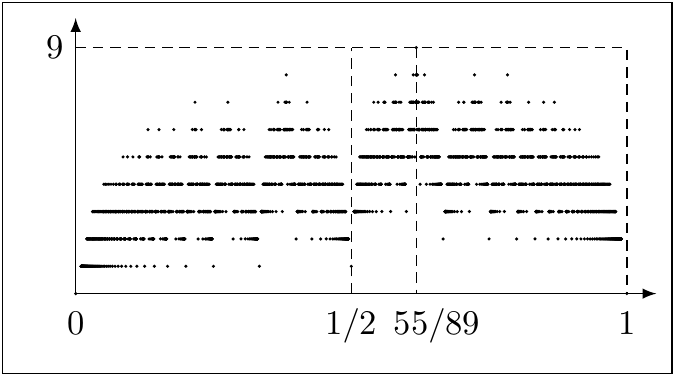}%
		\caption{%
			\EAdivision{}.
			The maximum number of steps occurs at $55/89$ which has continued fraction expansion $[0; 1, 1, 1, 1, 1, 1, 1, 1, 2]$.
		}%
		\label{fig:Euclid:Division}%
	\end{subfigure}%
	\\[4mm]
	\begin{subfigure}[t]{6.6cm}
		\centering%
		\includegraphics{./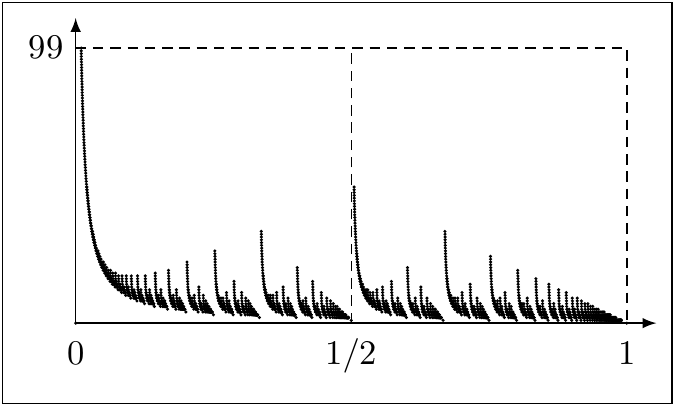}%
	\end{subfigure}%
	\qquad%
	\begin{subfigure}[b]{\subFigureWidth}%
		\caption{%
			\EAbyexcess{} (defined in~\cref{sec:Euclid:Variant}).
			The maximum number of steps occurs at $1/100$ which has the minus continued fraction expansion $\rcfrac{ 1; 2, \ldots, 2, 2 }$ (with `$2$' occurring $99$ times).
		}%
		\label{fig:EuclidMinus}%
	\end{subfigure}%
	\vspace{2mm}%
	\caption[]{%
		The number of steps of \EAclassical{}, \EAdivision{} \&{} \EAbyexcess{} when applied to all reduced $a/b\in[0,1)\cap\QQ$ with $1\leq b\leq 100$.
	}%
	\label{fig:Euclid}%
\end{figure}

\subsection{Variants of the Euclidean algorithm}
\label{sec:Euclid:Variant}

Several other variants of the Euclidean algorithm have been considered in the literature (see, e.g., \cite{vallee2000a-unifying-framework,Valleedynamical} for a selection).
For the most part, they arise (ignoring some technicalities) from modifying the distinguishing conditions of the cases~A and~B as introduced in~\cref{sec:Euclid:Classical}.
Here we discuss only one such variant.
In fact, for convenience, we restrict our discussion to only stating a variant that is more similar in spirit to \EAdivision{} rather than \EAclassical{}.
To obtain this variant---referred to as \EAbyexcess{} in the sequel---modify Case~A of \EAdivision{} to replace the input $(a,b)$ by $(B-a,b)$, where $B$ is the smallest multiple of $b$ not smaller than $a$ and make a similar modification to Case~B.
Given this modification, our example~\cref{eq:Example} takes the shape
\(
	(11,3) \stackrel{*}{\mapsto} (1,3) \stackrel{*}{\mapsto} (\underline{1},0)
\).

Once more, one can associate a certain continued fraction expansion of a number $a/b\in[0,1)$ to the behaviour of the algorithm on the input $(a,b)$.
The particular continued fraction expansion relevant in this case is often called \emph{minus continued fraction expansion}\footnote{%
	Instead of `minus', some authors use the attribute `backwards' or `regular' instead.
} and takes the shape
\begin{gather}\label{eq:RegularContinuedFractionExpansion}
	\frac{a}{b}
	= \rcfrac{ 1; b_1, \ldots, b_m }
	\coloneqq 1 - \cfrac{1}{
		b_1 - \cfrac{1}{
			b_2 - \ldots \cfrac{}{
					\ldots - \cfrac{1}{
					b_m
				}
			}
		}
	} \, ,
\end{gather}
where $m\in\NN$ and $b_1,b_2,\ldots,b_m\geq 2$ are integers.
When expanding $a/b$ as in~\cref{eq:RegularContinuedFractionExpansion}, then $m+1$ can be seen to be the number of steps taken by \EAbyexcess{} on the input $(a,b)$.
We shall write $\ell(a/b)$ for the number $m$ from~\cref{eq:RegularContinuedFractionExpansion} in the sequel.
(See \cref{fig:EuclidMinus} for a plot of $\ell(a/b)$.)
For further background on continued fractions we refer to~\cite{Perron}.

\subsection{Asymptotics for the number of steps of Euclidean algorithms}

It is an interesting question to study statistical properties of the number of steps of the Euclidean algorithm (and its variants), or---equivalently---distribution properties of continued fractions.
It was Heilbronn~\cite{heilbronn1968average-length} who first identified the principal term of the asymptotics for the average number of steps in the case of the classical Euclidean algorithm, the average being taken over \emph{numerators}:
\[
	\frac{1}{\varphi(b)} \sum_{\substack{ a\leq b \\ \gcd(a,b)=1 }} s\parentheses*{\frac{a}{b}}
	= A_1 \log b + O\parentheses{ (\log\log b)^4 } \quad
	(\text{as~} b\to\infty);
\]
here $\varphi(n) \coloneqq \#\set{ 1\leq m\leq n : \gcd(m,n)=1 }$ ($n\in\mathbb{N}$) is Euler's totient function and $A_1$ is an explicitly given non-zero constant.\footnote{%
	See~\cref{sec:Notation} for a comment on the notation.
}
For the same average, an asymptotic formula with two significant terms was obtained later by Porter~\cite{Porter}:
\[
	\frac{1}{\varphi(b)} \sum_{\substack{ a\leq b \\ \gcd(a,b)=1 }} s\parentheses*{\frac{a}{b}}
	= A_1 \log b + A_2 + O_\epsilon\parentheses{ b^{-1/6+\epsilon} };
\]
here $A_1$ is as before and $A_2$ is also an explicitly given non-zero constant.
Bykovski\u{\i} and Frolenkov~\cite{Frolenkov2} have recently obtained a generalisation of this and obtained an improved error term.

Considering averages over both numerators \emph{and} denominators, an asymptotic formula with power-law fall-off in the error term was obtained by Vall\'ee~\cite{vallee2000a-unifying-framework} through the use of probability theory and ergodic-theoretic methods. 
This was improved by Ustinov~\cite{Ustinovasymptotic}, who obtained an asymptotic formula with better fall-off in the error term than the one that can be derived from Porter's result:
\begin{equation}\label{Ustinov}
	\frac{1}{\#\mathscr{F}(Q)} \mathop{ \sum_{b\leq Q} \sum_{a\leq b} }_{ \gcd(a,b)=1 } s\parentheses*{\frac{a}{b}}
	= B_1 \log Q + B_2 + O\parentheses{ (\log Q)^5 / Q },
\end{equation}
where
\[
	B_1 = \frac{\log 2}{2\zeta(2)}, \quad
	B_2 = \frac{\log 2}{4\zeta(2)} \parentheses*{ 3 \log 2 + 4 \gamma - 2 \frac{\zeta'(2)}{\zeta(2)} - 3 } - \frac{1}{4},
\]
$\gamma$ denotes the Euler--Mascheroni constant, $\zeta$ is the Riemann zeta function, and
\[
	\mathscr{F}(Q) = \set{ a/b\in\QQ }[ \gcd(a,b)=1,\, 0\leq a\leq b\leq Q ]
\]
denotes the set of \emph{Farey fractions of order $Q$}.
In this regard it is worth noting that another natural way of averaging is over all pairs $(a,b)$ with $1\leq a\leq b \leq Q$ without assuming coprimality of $a$ and $b$.
However, this situation is easily covered using~\cref{Ustinov} and Möbius inversion.

\medskip

While examining the statistical properties of different variations of the Euclidean algorithm, Vall\'ee~\cite{Valleedynamical} obtained also the leading term of the asymptotic formula for the expectation of the number of steps of the by-excess Euclidean algorithm (and hence for the average length of minus continued fractions).
This was improved by Zhabitskaya~\cite{zhabitskaya2009average-length} (following the approach of Ustinov~\cite{Ustinovasymptotic}), a few years later, who showed that
\begin{equation}\label{Zhabitskaya}
	\frac{1}{\#\mathscr{F}(Q)} \mathop{ \sum_{b\leq Q} \sum_{a\leq b} }_{ \gcd(a,b)=1 } \ell\parentheses*{\frac{a}{b}}
	= C_1 (\log Q)^2 + C_2 \log Q + C_3 + O\parentheses{ (\log Q)^6 / Q },
\end{equation}
where $C_1, C_2, C_3$ are explicitly given non-zero constants, the first two being given by
\begin{equation}\label{eq:Zhabitskaya:Constants}
	C_1 = \frac{1}{2\zeta(2)}, \quad
	C_2 = \frac{1}{\zeta(2)} \parentheses*{ 2\gamma - \frac{3}{2} - 2 \frac{\zeta'(2)}{\zeta(2)} },
\end{equation}
and the value of $C_3$ being given by a somewhat longer, yet similar expression which we omit here.
Both error terms in~\cref{Ustinov} and~\cref{Zhabitskaya} have been improved to $O\parentheses{ (\log Q)^3 / Q }$ by Frolenkov~\cite{Frolenkov} who incorporated ideas of Selberg from the elementary proof of the prime number theorem.

For more results regarding the expectation and the variance of the number of steps of the classical and by-excess Euclidean algorithm, we also refer to the work of Baladi and Vall\'ee~\cite{BaladiVallee}, Bykovski\u{\i}~\cite{Bykovski}, Dixon~\cite{dixon1970number-of-steps,Dixonasimple}, Hensley~\cite{hensley1994number-of-steps} and Ustinov~\cite{Ustinovcalculation,Ustinovonthestatistical}.

\subsection{Dedekind sums}

Let $\floor{\eta} = \min\set{ n\in\ZZ }[ n\leq\eta ]$ denote the integer part of $\eta\in\RR$.
Then the \emph{saw-tooth function} is defined as
\[
	\sawtooth{\eta} = \begin{cases}
		\eta - \floor{\eta} - 1/2 & \text{if } \eta\in\RR\setminus\ZZ, \\
		0                         & \text{if } \eta\in\ZZ. \\
	\end{cases}
\]
For any pair $a,b\in\ZZ$, $b\neq 0$, the \emph{Dedekind sum}\footnote{
	The notation $s(a/b)$ is also commonly used, but would conflict with our notation for the length of~\cref{eq:OrdinaryContinuedFractionExpansion}.
} $D(a,b)$ is defined as
\[
	D(a,b) = \sum_{n\leq b} \parentheses*{\!\!\parentheses*{\frac{n}{b}}\!\!} \parentheses*{\!\!\parentheses*{\frac{na}{b}}\!\!}.
\]
It can be verified that $D(a,b) = D(ka,kb)$ for any non-zero integer $k$.
Hence, $D(a/b) \coloneqq D(a,b)$ is well defined.
Moreover, the function $D\colon \QQ\to\QQ$ just defined is periodic with period one.

Dedekind sums originally arose in connection with the multiplier system
for Dedekind's \emph{eta} function over the modular group of two by two integer matrices of determinant~one~\cite{Dedekind} and also satisfy a curious reciprocity law.
By means of the latter Barkan~\cite{Barkan} and (independently) Hickerson~\cite{hickerson1977continued-fractions} have obtained the following identity which connects Dedekind sums with continued fraction expansions:
\begin{align}\label{Hickerson}
	D(a/b)
	= \frac{(-1)^n - 1}{8} + \frac{ a/b - (-1)^n \mkern 1mu [0;a_n,\ldots,a_2,a_1] + \varSigma_\pm(a/b) }{12};
\end{align}
here $a/b = [0; a_1,a_2,\ldots,a_n]$ is as in~\cref{eq:OrdinaryContinuedFractionExpansion} and
\begin{equation}\label{eq:varSigmaPM}
	\varSigma_\pm(a/b) \coloneqq \sum_{j\leq n} (-1)^{j-1} a_j.
\end{equation}
(See \cref{fig:Euclid:Alternating} for a plot of $\varSigma_\pm$.)
In particular, Hickerson employed~\cref{Hickerson} to prove that the set $\set{(a/b,D(a/b)) : a/b\in\QQ }$ is dense in $\RR^2$.

\begin{figure}[!ht]
	\centering
	\includegraphics{./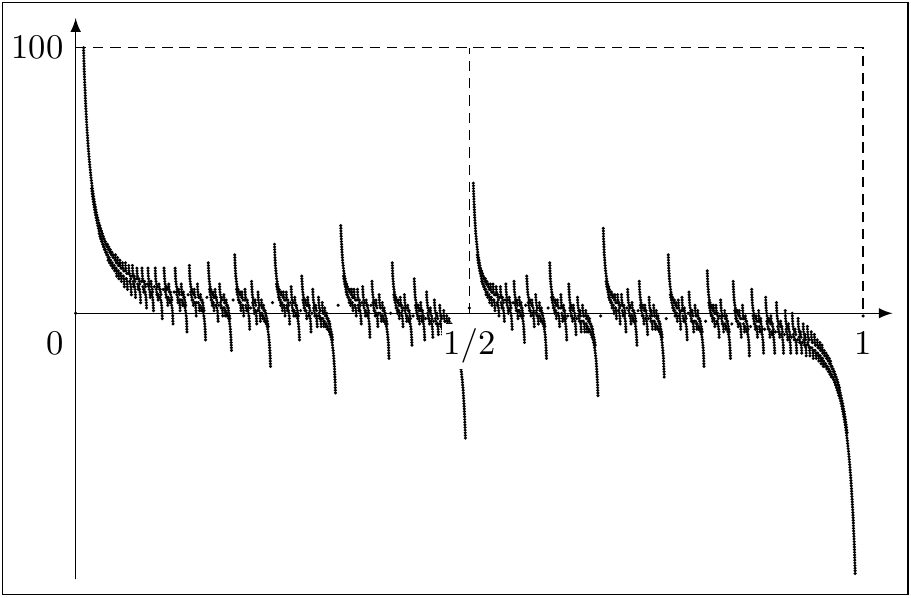}
	\caption[]{%
		Plot of $\varSigma_\pm(a/b)$ when applied to all Farey fractions $a/b\in[0,1]\cap\QQ$ with $1\leq b\leq 100$.
		Note that the average of the plotted values over the interval $[0,1/2)$ is clearly positive, whereas the average of the plotted values over the interval $[1/2,1)$ is negative.
	}%
	\label{fig:Euclid:Alternating}%
\end{figure}

Concerning distribution properties of Dedekind sums observe that via the symmetry property $D(x) = - D(1-x)$ it is easy to see that
\[
	\sum_{x\in\mathscr{F}(Q)} D(x) = 0.
\]
On the other hand, let
$
	\mathscr{F}_0(Q) = \mathscr{F}(Q) \cap \lbrack0,1/2\rparen
$
denote `half' of all Farey fractions with denominators bounded by $Q$.
Then, on the basis of numerical evidence, it has been conjectured by Ito~\cite{ito2004density-result} that
\begin{gather}\label{eq:UnderstandThis}
	\lim\limits_{Q\to\infty} \varSigma(Q) = +\infty,
	\quad\text{where}\quad
	\varSigma(Q) \coloneqq \frac{1}{\#\mathscr{F}(Q)} \sum_{x\in\mathscr{F}_0(Q)} D(x).
\end{gather}
For an exposition of results on Dedekind sums we refer to the classical work of Rademacher and Grosswald~\cite{RademacherGrosswald}, as well as a more up-to-date survey of Girstmair~\cite{girstmair2018on-the-distribution} with a focus on distribution properties.

\section{Main results}

\subsection{Results}

One of the main results of the present work is a proof of Ito's conjecture:
\begin{thm}[Ito's conjecture is true]\label{thm:Goal}
	The statement in \cref{eq:UnderstandThis} holds.
	In fact, one even has the following stronger quantitative version:
	\begin{equation}\label{eq:Ito:AsymptoticVersion}
		\frac{1}{\#\mathscr{F}(Q)} \sum_{x\in\mathscr{F}_0(Q)} D(x)
		= \frac{1}{16} \log Q + O(1).
	\end{equation}
\end{thm}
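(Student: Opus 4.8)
The plan is to reduce the statement, via the Barkan--Hickerson identity \cref{Hickerson}, to a two-term asymptotic formula for the average number of steps of \EAbyexcess{} over the Farey fractions in $(0,1/2)$, and then to establish that formula by the method of Heilbronn, Ustinov and Zhabitskaya. First note that in \cref{Hickerson} every term besides $\varSigma_\pm(a/b)/12$ is uniformly bounded: $\frac{(-1)^n-1}{8}\in\{0,-\tfrac14\}$, one has $\frac{a/b}{12}\in[0,\tfrac1{24})$ because $a/b<1/2$, and $\frac{(-1)^n[0;a_n,\dots,a_1]}{12}\in(-\tfrac1{12},\tfrac1{12})$; hence $D(a/b)=\tfrac1{12}\varSigma_\pm(a/b)+O(1)$ pointwise. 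Summing over $x\in\mathscr{F}_0(Q)$ and dividing by $\#\mathscr{F}(Q)\asymp Q^2$, the theorem becomes equivalent to $\sum_{x\in\mathscr{F}_0(Q)}\varSigma_\pm(x)=\tfrac34\,\#\mathscr{F}(Q)\log Q+O(\#\mathscr{F}(Q))$.

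The next step is to express $\varSigma_\pm$ through the Euclidean step counts. Writing $a/b=[0;a_1,\dots,a_n]$, a comparison of the three continued fraction expansions yields the pointwise identity
\[
	\varSigma_\pm(a/b)=2\,\ell(a/b)-(a_1+\dots+a_n)+2\cdot\indic[\,n\text{ odd}\,],
\]
while for $0<x<\tfrac12$ one has $\ell(x)+\ell(1-x)=a_1+\dots+a_n$ (partial quotients of the CF of $x$), and $a_1+\dots+a_n$---the number of steps of \EAclassical{}---is invariant under $x\mapsto1-x$. Summing the second relation over all of $\mathscr{F}(Q)$ gives $2\sum_{\mathscr{F}(Q)}\ell=\sum_{\mathscr{F}(Q)}(a_1+\dots+a_n)+O(1)$, and the symmetry then gives $\sum_{\mathscr{F}_0(Q)}(a_1+\dots+a_n)=\sum_{\mathscr{F}(Q)}\ell+O(1)$; substituting into the identity above,
\[
	\sum_{x\in\mathscr{F}_0(Q)}\varSigma_\pm(x)=2\sum_{x\in\mathscr{F}_0(Q)}\ell(x)-\sum_{x\in\mathscr{F}(Q)}\ell(x)+O(\#\mathscr{F}(Q)).
\]
By \cref{Zhabitskaya} (with Frolenkov's error term) the last sum equals $\#\mathscr{F}(Q)\bigl(C_1(\log Q)^2+C_2\log Q+C_3\bigr)+O((\log Q)^3)$ with $C_1,C_2$ as in \cref{eq:Zhabitskaya:Constants}, so it remains to prove
\[
	\sum_{x\in\mathscr{F}_0(Q)}\ell(x)=\tfrac{C_1}{2}\,\#\mathscr{F}(Q)(\log Q)^2+\Bigl(\tfrac{C_2}{2}+\tfrac38\Bigr)\#\mathscr{F}(Q)\log Q+O(\#\mathscr{F}(Q)),
\]
i.e.\ the two-term average of the \EAbyexcess{} step count over $\mathscr{F}_0(Q)$. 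Here the leading term is, as expected, exactly half of the one over all of $\mathscr{F}(Q)$, and the genuine bias lives in the $\log Q$ term---that some bias must show up is clear already from $\varSigma_\pm(x)+\varSigma_\pm(1-x)=2$, which forces $\sum_{\mathscr{F}(Q)}\varSigma_\pm=O(\#\mathscr{F}(Q))$ with no logarithmic term, so that restricting to $\mathscr{F}_0(Q)$ destroys that cancellation.

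To obtain this last asymptotic I would follow Heilbronn--Ustinov--Zhabitskaya: expand $\ell(a/b)$ as a sum over the entries $b_1,\dots,b_m$ of the minus continued fraction $\rcfrac{1;b_1,\dots,b_m}$ (equivalently, over the steps of \EAbyexcess), interchange the order of summation, and for each index $j$ rewrite $\sum_{a/b\in\mathscr{F}_0(Q),\,\ell(a/b)\ge j}b_j$ as the number of integer solutions of a system of Diophantine inequalities obtained by cutting $a/b$ at its $j$-th minus-CF convergent; the condition $a/b\in(0,1/2)$ translates into $b_1=2$, which is now imposed throughout. Each count is then treated as a lattice-point count and evaluated asymptotically. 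The hard part will be to control the error terms in these counts uniformly in $j$ and in the (heavy-tailed) entries $b_j$, and sharply enough to reach the $\log Q$ term---crude bounds only produce the $(\log Q)^2$ term---and it is precisely in the delicate fractional-part sums of Ustinov's argument, now over the modified domain, that the ``additional complications'' from working with only half the Farey fractions enter. Alternatively one can bypass the detour through $\ell$ and attack $\sum_{x\in\mathscr{F}_0(Q)}\varSigma_\pm(x)=\sum_{j\ge1}(-1)^{j-1}\sum_{a/b\in\mathscr{F}_0(Q),\,n\ge j}a_j$ directly by the analogous splitting at the $j$-th ordinary convergent subject to $a_1\ge2$; there the alternating sign itself produces the cancellation of one logarithmic factor, with the surviving main term coming from the first few values of $j$.
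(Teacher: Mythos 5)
Your reduction is sound and is in substance the paper's own deduction. The observation $D(x)=\varSigma_\pm(x)/12+O(1)$ from \cref{Hickerson}, and the identity $\varSigma_\pm(x)=2\ell(x)-(a_1+\cdots+a_n)+O(1)$ (equivalently $\varSigma_{\text{odd}}=\ell(x)+\epsilon(x)$, $\varSigma_{\text{even}}=\ell(1-x)-\epsilon(x)$, which is how the paper phrases it in \cref{cor:SigmaPmAsymptotics}), combined with \cref{Zhabitskaya}, correctly reduce \cref{eq:Ito:AsymptoticVersion} to the statement that
\[
	\frac{1}{\#\mathscr{F}(Q)}\sum_{x\in\mathscr{F}_0(Q)}\ell(x)=\frac{C_1}{2}(\log Q)^2+\Bigl(\frac{C_2}{2}+\frac38\Bigr)\log Q+O(1).
\]
This is precisely \cref{thm:Zhabitskaya:RestrictedVariant}, and your bookkeeping up to that point (including the remark that $\varSigma_\pm(x)+\varSigma_\pm(1-x)=2$ kills the logarithm on the full interval) is correct.

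The genuine gap is that this last asymptotic --- the entire technical content of the paper --- is not proved but only planned. You assert the coefficient $C_2/2+3/8$ as the target (it is forced by the claimed answer $\tfrac{1}{16}\log Q$, since $\tfrac34/12=\tfrac1{16}$), but nothing in the proposal produces the $+3/8$; and since the theorem is exactly the statement that this bias is a specific positive constant, the proof is circular without it. You yourself flag the decisive difficulty (``control the error terms \dots{} sharply enough to reach the $\log Q$ term --- crude bounds only produce the $(\log Q)^2$ term'') and leave it open. For comparison, the paper does not cut at the $j$-th minus-convergent with $b_1=2$ imposed; it converts the restriction $a<b/2$ into the modular-inverse condition $\inv_p(q)\leq p/2$ on a four-variable lattice-point count (\cref{lem:reducedN4system}), splits into five cases (\cref{prop:CountingSolutionsToSystem}), and extracts the bias from the anomaly of the half-interval totient counts $\delta^\pm(q)$ at $q=1,2$ (\cref{lem:Euler'sPhiInHalfIntervals}), which surfaces only in Cases~3 and~5. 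Until your sketched lattice-point analysis is carried out to the precision of a second main term --- uniformly in the cutting index and with the half-interval restriction handled --- Theorem~\ref{thm:Goal} is not established.
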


The proof of \cref{thm:Goal} rests crucially on the following variant of~\cref{Zhabitskaya} which we believe to be of independent interest:
\begin{thm}[Bias in \EAbyexcess{}]%
	\label{thm:Zhabitskaya:RestrictedVariant}
	We have
	\[
		\frac{1}{\#\mathscr{F}(Q)}\sum_{x\in\mathscr{F}_0(Q)} \ell(x)
		= c_1 (\log Q)^2 + c_2 \log Q + O(1),
	\]
	where $c_1,c_2$ are non-zero constants satisfying $2c_1 = C_1$ and $2c_2 > C_2$ with the constants $C_1$ and $C_2$ given in~\cref{eq:Zhabitskaya:Constants}.
	More precisely,
	\[
		c_1 = \frac{1}{4\zeta(2)}, \quad
		c_2
		= \frac{1}{2\zeta(2)} \parentheses*{ 2\gamma - \frac{3}{2} - 2\frac{\zeta'(2)}{\zeta(2)} + \frac{3\zeta(2)}{4} }
		= \frac{C_2}{2} + \frac{3}{8}.
	\]
\end{thm}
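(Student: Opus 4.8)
The plan is to reduce the restricted sum $\sum_{x\in\mathscr{F}_0(Q)}\ell(x)$ to a counting problem for solutions of a system of Diophantine inequalities, in the spirit of Heilbronn, Ustinov and Zhabitskaya, but with an extra constraint reflecting the restriction $x<1/2$. The key observation is that a minus continued fraction $\rcfrac{1;b_1,\dots,b_m}$ represents a number in $(0,1/2)$ precisely when its first partial quotient satisfies $b_1\geq 3$ (since $\rcfrac{1;b_1,\dots}=1-1/(b_1-\theta)$ with $0<\theta<1$, which lies below $1/2$ iff $b_1-\theta>2$, i.e.\ $b_1\geq 3$; the boundary case $b_1=2$ with the remaining tail needs a separate, easy analysis). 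Equivalently, in terms of the ordinary continued fraction $[0;a_1,\dots,a_n]$, the condition $a/b<1/2$ is simply $a_1\geq 2$, and one has the classical identity $\ell(a/b)=a_1+a_2+\dots+a_n-n=\Bigl(\sum_j a_j\Bigr)-s(a/b)$ relating the minus-CF length to the ordinary partial quotients. So I would first express $\sum_{x\in\mathscr{F}_0(Q)}\ell(x)$ as a difference of two sums over Farey fractions with $a_1\geq 2$: the sum of $\sum_j a_j$ and the sum of $s(a/b)=n$.

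Next I would evaluate each of these two pieces. For the piece $\sum_{x<1/2} s(x)$ one can use a "folding'' trick: the involution $x\mapsto 1-x$ on Farey fractions changes $[0;a_1,a_2,\dots]$ with $a_1\geq 2$ into $[0;1,a_1-1,a_2,\dots]$, which has $n$ increased by one, so the lengths on the two halves differ in a completely controlled way and one recovers $\sum_{x<1/2}s(x)$ from Ustinov's formula~\cref{Ustinov} up to an $O(1)$ (in fact up to a $\tfrac12\log Q + O(1)$) discrepancy; this contributes only to the lower-order constant and is harmless at the level of precision $O(1)$ claimed. The genuinely new input is the partial-quotient-sum piece $\sum_{x\in\mathscr{F}_0(Q)}\sum_j a_j$, the sum of the number of steps of \EAclassical{} over the left half; here I would set up the Diophantine-inequality count directly. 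Writing the convergents $p_k/q_k$ of $a/b$, the standard encoding turns "$b\leq Q$, $a_1\geq 2$'' and the appearance of each partial quotient into counting lattice points $(u,v)$ in certain triangles with $uv\leq Q$ and a linear side condition coming from $a_1\geq 2$; summing over all positions and all partial quotient values produces, after the usual hyperbola-summation and Euler-product bookkeeping, a main term of the shape $c_1'(\log Q)^2+c_2'\log Q+O(1)$ with explicit $c_1',c_2'$.

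The main obstacle I anticipate is handling the side condition $a_1\geq 2$ uniformly through the entire Heilbronn--Ustinov machinery: the restriction couples to the innermost summation over the first partial quotient and slightly distorts the regions over which one sums, so one must redo the asymptotic evaluation of the relevant sums $\sum_{uv\leq Q}\cdots$ with a truncation at the line corresponding to $a_1=1$ rather than over the full region, keeping track of the secondary terms carefully enough to certify $O(1)$ total error (rather than merely $O(\log Q)$). Once the two explicit constants $c_1=\tfrac{1}{4\zeta(2)}$ and $c_2=\tfrac{1}{2\zeta(2)}\bigl(2\gamma-\tfrac32-2\tfrac{\zeta'(2)}{\zeta(2)}+\tfrac{3\zeta(2)}{4}\bigr)$ emerge from this computation, the relations $2c_1=C_1$ and $c_2=\tfrac{C_2}{2}+\tfrac38>\tfrac{C_2}{2}$ are immediate by comparison with~\cref{eq:Zhabitskaya:Constants}; the strict inequality $2c_2>C_2$ is just the positivity of the extra $\tfrac{3}{8}$, coming from the $\tfrac{3\zeta(2)}{4}=\tfrac{3}{8}\cdot 2\zeta(2)$ surplus that the $a_1\geq 2$ restriction contributes. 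I would present the reduction and the folding identity first, then quote whichever auxiliary counting lemmas are proved later in the paper, and finally assemble the constants.
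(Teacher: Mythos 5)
Your reduction hinges on the identity $\ell(a/b)=\bigl(\sum_j a_j\bigr)-n$, and that identity is false. The correct relation between the minus expansion and the ordinary one is the one the paper uses in \cref{eq:Length:Vs:OddPartialQuotientSum}: $\ell(x)=\varSigma_{\text{odd}}(x)-\epsilon(x)$ with $\epsilon(x)\in\{0,1\}$, i.e.\ only the \emph{odd-indexed} partial quotients count, and correspondingly $\ell(x)+\ell(1-x)=\sum_j a_j(x)$. Your identity would force $s(x)=\ell(1-x)$, which cannot hold since $s$ averages to order $\log Q$ while $\ell$ averages to order $(\log Q)^2$. Concretely, $3/11=[0;3,1,2]=\rcfrac{1;2,2,3,2}$ has $\ell(3/11)=4$, whereas your formula gives $6-3=3$. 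The error is not cosmetic: feeding your decomposition through the folding symmetry $\sum_j a_j(x)=\sum_j a_j(1-x)$ gives $\sum_{x\in\mathscr{F}_0(Q)}\sum_j a_j=\tfrac12\sum_{x\in\mathscr{F}(Q)}\sum_j a_j+O(Q^2)\sim C_1(\log Q)^2\,\#\mathscr{F}(Q)$, and subtracting the $O(Q^2\log Q)$ contribution of $s$ would yield $c_1=C_1$, twice the correct value $c_1=C_1/2$. A further slip: a minus expansion $\rcfrac{1;b_1,\ldots}$ lies in $(0,1/2)$ iff $b_1=2$, not $b_1\geq3$ (you inverted $1-1/(b_1-\theta)<1/2\iff b_1-\theta<2$); this is consistent with your correct criterion $a_1\geq2$, because $[0;a_1,a_2,\ldots]$ converts to a minus expansion beginning with a block of $a_1-1$ twos.

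Even with the identity repaired, the hard object becomes $\sum_{x\in\mathscr{F}_0(Q)}\varSigma_{\text{odd}}(x)$, and your folding trick cannot reach it: the involution $x\mapsto1-x$ interchanges odd- and even-indexed partial quotients (after the shift $[0;1,a_1-1,a_2,\ldots]$), so it relates $\varSigma_{\text{odd}}$ on one half of $\mathscr{F}(Q)$ to $\varSigma_{\text{even}}$ on the other half, which is exactly the unknown quantity; no symmetry halves it. The paper therefore works directly with $\ell$ on $\mathscr{F}_0(Q)$, encoding it as a count of lattice points $(p,q,k,n)$ with $\gcd(p,q)=1$, $2\leq nq+kp\leq Q$, $1\leq k<n$, together with the extra constraint $\inv_p(q)\leq p/2$ that encodes $x<1/2$. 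The bias $2c_2>C_2$ then arises, in two of the five counting cases, precisely from the asymmetry of \cref{lem:Euler'sPhiInHalfIntervals} at $q\in\{1,2\}$ (namely $\delta^+(1)=1$, $\delta^-(1)=0$ and $\delta^+(2)=0$, $\delta^-(2)=1$), which produces the surplus $3\zeta(2)/4$, i.e.\ the $+3/8$ in $c_2$. Your sketch contains no mechanism that isolates this contribution.
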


The above theorem may be interpreted as a quantitative version of the statement that the length $\ell(a/b)$ of the minus continued fraction expansion~\cref{eq:RegularContinuedFractionExpansion} tends to be larger on average on $\mathscr{F}_0(Q)$ than on $\mathscr{F}(Q) \setminus \mathscr{F}_0(Q)$ (due to $2c_2 > C_2$; see~\cref{Zhabitskaya}).
This may be phrased equivalently as saying that \EAbyexcess{} takes longer on average for fractions in $\lbrack0,1/2\rparen$ than it does for fractions in $\lbrack1/2,1\rparen$.

\medskip

In view of the above it seems natural to ask if similar results can be obtained for the other algorithms \EAclassical{} and \EAdivision{} discussed in~\cref{sec:Euclid:Classical}.
This turns out to be a rather easier question.
For \EAclassical{} one sees no difference in behaviour on $\mathscr{F}_0(Q)$ versus on $\mathscr{F}(Q) \setminus \mathscr{F}_0(Q)$, as should be evident from the symmetry in~\cref{fig:Euclid:Subtractive} about the vertical line through $1/2$.
The latter symmetry may be verified easily by noting that $x = [0; a_1, a_2,\ldots,a_n]$ (with $a_1\geq 2$ so that $x \leq 1/2$) and $1-x = [0; 1, a_1-1, a_2,\ldots,a_n]$ have the same sum of partial quotients, \emph{viz.} identical running time when fed into \EAclassical{}.
On the other hand, an analogue of \cref{thm:Zhabitskaya:RestrictedVariant} may be obtained for \EAdivision{}:
\begin{prop}[Bias in \EAdivision{}]%
	\label{thm:Zhabitskaya:RestrictedVariant:Euclid}
	We have
	\[
		\frac{1}{\#\mathscr{F}(Q)}\sum_{x\in\mathscr{F}_0(Q)} s(x)
		= b_1 \log Q + b_2 + \LandauO((\log Q)^5 / Q),
	\]
	where $2 b_1 = B_1$ and $2b_2 < B_2$ with the constants $B_1$ and $B_2$ given from~\cref{Ustinov}.
	More precisely, $2 b_2 = B_2 - 1/2$.
\end{prop}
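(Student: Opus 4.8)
The plan is to deduce this from Ustinov's asymptotic~\cref{Ustinov} by means of the very same involution $x\mapsto 1-x$ that was used above to explain the symmetry visible in~\cref{fig:Euclid:Subtractive}. If $x=[0;a_1,\ldots,a_n]$ lies in $(0,1/2)$, then $a_1\geq 2$ and $1-x=[0;1,a_1-1,a_2,\ldots,a_n]$; the right-hand side is again of the normalised shape~\cref{eq:OrdinaryContinuedFractionExpansion}---its last partial quotient is still $a_n\geq 2$, and $a_1-1\geq 1$---so it is the continued-fraction expansion of $1-x$, and it has exactly one more partial quotient than that of $x$. Hence
\[
	s(1-x)=s(x)+1\qquad\text{for every }x\in\mathscr{F}(Q)\cap(0,1/2),
\]
and $x\mapsto 1-x$ restricts to a denominator-preserving bijection from $\mathscr{F}(Q)\cap(0,1/2)$ onto $\mathscr{F}(Q)\cap(1/2,1)$.

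Write $S_0\coloneqq\sum_{x\in\mathscr{F}_0(Q)}s(x)$ and $N\coloneqq\#\bigl(\mathscr{F}(Q)\cap(0,1/2)\bigr)$. First I would split the full Farey sum according to the position of $x$ relative to $1/2$; using $s(0)=0$, the identity just displayed, and that the three boundary fractions $0,1/2,1$ contribute only $O(1)$, this gives
\begin{align*}
	\sum_{x\in\mathscr{F}(Q)}s(x)
	&=S_0+\sum_{x\in\mathscr{F}(Q)\cap(0,1/2)}\bigl(s(x)+1\bigr)+O(1)\\
	&=2S_0+N+O(1).
\end{align*}
Then I would record two elementary facts: $\#\mathscr{F}(Q)=2N+O(1)$ (again by the involution, its fixed point $1/2$ now accounted for) and $\#\mathscr{F}(Q)=Q^2/(2\zeta(2))+O(Q\log Q)$, so that $N=\tfrac12\#\mathscr{F}(Q)+O(1)$ and $\#\mathscr{F}(Q)\asymp Q^2$.

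Finally, inserting~\cref{Ustinov} in the form $\sum_{x\in\mathscr{F}(Q)}s(x)=\#\mathscr{F}(Q)(B_1\log Q+B_2)+O(Q(\log Q)^5)$ into the above identity and solving for $S_0$ gives
\[
	S_0=\#\mathscr{F}(Q)\Bigl(\tfrac{B_1}{2}\log Q+\tfrac{B_2}{2}-\tfrac14\Bigr)+O\bigl(Q(\log Q)^5\bigr);
\]
dividing by $\#\mathscr{F}(Q)\asymp Q^2$ then yields the asserted formula with $b_1=B_1/2$ and $b_2=\tfrac12\bigl(B_2-\tfrac12\bigr)$, so that $2b_1=B_1$, $2b_2=B_2-\tfrac12<B_2$, and the non-vanishing of $b_1,b_2$ follows from the explicit values of $B_1,B_2$. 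I do not expect a genuine obstacle here: the whole argument is bookkeeping around Ustinov's theorem. The only points needing a little care are verifying that the continued-fraction involution acts on normalised expansions exactly as claimed (in particular that no partial quotient degenerates in the process), and the precise handling of the finitely many boundary fractions and of the count $N$, which together pin down the constant $-\tfrac14$.
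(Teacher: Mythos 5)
Your proposal is correct and is essentially the paper's own argument: the paper proves this proposition in one line by combining Ustinov's formula~\cref{Ustinov} with the identity $s(x)=s(1-x)-1$ for $x\in(0,1/2)$, which is exactly the involution and bookkeeping you carry out in detail. The only point you might state slightly more carefully is the case $n=1$ of the involution, where the new last partial quotient is $a_1-1$ and one needs $a_1\geq 3$ (which holds since $x<1/2$ forces $a_1>2$ when $n=1$) rather than just $a_1-1\geq1$.
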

\begin{proof}
	This follows immediately from~\cref{Ustinov} and the fact that $s(x) = s(1-x) - 1$ for $x\in(0,1/2)$.
\end{proof}
We should like to mention that Bykovski\u{\i}~\cite{Bykovski} has obtained an asymptotic formula for averaging $s(a/q)$ over all $a$ in some arbitary interval of length at most $q$.
However, the error term in his result does not permit one to deduce \cref{thm:Zhabitskaya:RestrictedVariant:Euclid}.

Generalising \cref{thm:Zhabitskaya:RestrictedVariant} and \cref{thm:Zhabitskaya:RestrictedVariant:Euclid} to averages over $\mathscr{F}\cap\ropeninterval{0,\alpha}$ seems to be an interesting problem.
However, this requires a more careful analysis and a sufficiently flexible generalisation of \cref{lem:Euler'sPhiInHalfIntervals} below.
As this seemed dispensable for our primary intent of proving \cref{thm:Goal}, we shall address this elsewhere in forthcoming work (see also the first author's doctoral dissertation~\cite{minelli2022thesis}).

\subsection{Plan of the paper}

In the next section we show how \cref{thm:Goal} can be deduced from \cref{thm:Zhabitskaya:RestrictedVariant}.
The proof of \cref{thm:Zhabitskaya:RestrictedVariant} is rather more involved.
In~\cref{sec:MainThm:SketchOfProof} we sketch the overall argument and show how \cref{thm:Zhabitskaya:RestrictedVariant} can be deduced from a technical proposition (\cref{prop:CountingSolutionsToSystem}).
The proof of the latter is carried out in~\cref{sec:NumberOfSolutions:RQ}.

\subsection{Notation}
\label{sec:Notation}

We use the Landau notation $f(x) = \LandauO(g(x))$ and the Vinogradov notation $f(x) \ll g(x)$ to mean that there exists some constant $C>0$ such that $\abs{f(x)} \leq C g(x)$ holds for all admissible values of $x$ (where the meaning of `admissible' will be clear from the context).
Unless otherwise indicated, any dependence of $C$ on other parameters is specified using subscripts.
Similarly, we write `$f(x) = o(g(x))$ as $x\to\infty$' if $g(x)$ is positive for all sufficiently large values of $x$ and $f(x)/g(x)$ tends to zero as $x\to\infty$.

Given two coprime integers $a$ and $q\neq 0$ we write $\inv_q(a)$ for the smallest positive integer in the residue class $(a\bmod q)^{-1}$.

\section{Deducing \texorpdfstring{\cref{thm:Goal}}{Theorem\autoref{thm:Goal}} from \texorpdfstring{\cref{thm:Zhabitskaya:RestrictedVariant}}{Theorem\autoref{thm:Zhabitskaya:RestrictedVariant}}}

Throughout this section we shall assume that \cref{thm:Zhabitskaya:RestrictedVariant} has already been proved.
The main tool for deducing \cref{thm:Goal} from \cref{thm:Zhabitskaya:RestrictedVariant} is the formula~\cref{Hickerson} of Barkan and Hickerson.
In this vein, recall also the definition of $\varSigma_\pm(x)$ given in~\cref{eq:varSigmaPM}.
For a number $x\in\lbrack0,1\rparen$ as in~\cref{eq:OrdinaryContinuedFractionExpansion} let
\[
	\varSigma_{\text{odd} }(x) = \sum_{\substack{ i=1 \\ i\text{ odd}  }}^n a_i, \quad
	\varSigma_{\text{even}}(x) = \sum_{\substack{ i=2 \\ i\text{ even} }}^n a_i.
\]
Then, clearly,
\begin{equation}\label{eq:varSigmaPM:Decomposition}
	\varSigma_\pm(x) = \varSigma_{\text{odd}}(x) - \varSigma_{\text{even}}(x).
\end{equation}
The connection with minus continued fraction expansions and, thus, \cref{thm:Zhabitskaya:RestrictedVariant} arises as follows:
in~\cite{zhabitskaya2011mean-value} Zhabitskaya notes\footnote{%
	There appears to be a misprint in~\cite[Eq.~(8)]{zhabitskaya2011mean-value}: the left hand side should read $l'((b-a)/b)$, as can be deduced from the equations~(5) and~(7) in \emph{loc.\ cit.}
} that it is implicit in an article of Myerson~\cite{myerson1987semi-regular} that
\begin{gather}
	\label{eq:Length:Vs:OddPartialQuotientSum}
	\ell( x ) = \varSigma_{\text{odd} }(x) - \epsilon(x), \\
	\label{eq:Length:Vs:EvenPartialQuotientSum}
	\ell(1-x) = \varSigma_{\text{even}}(x) + \epsilon(x).
\end{gather}
Here $\epsilon(x) \in \set{0,1}$ is some correction term which is related to our way of forcing uniqueness in the continued fraction expansion~\cref{eq:OrdinaryContinuedFractionExpansion} by means of requiring the last partial quotient $a_n$ to exceed $1$.
In fact, one can describe the value of $\epsilon(x)$ quite precisely (see~\cite{zhabitskaya2011mean-value}), but this is not necessary for our particular application.

\begin{cor}\label{cor:SigmaPmAsymptotics}
	We have
	\[
		\frac{1}{\#\mathscr{F}(Q)}\sum_{x\in\mathscr{F}_0(Q)} \varSigma_\pm(x)
		= \frac{3}{4} \log Q + O(1).
	\]
\end{cor}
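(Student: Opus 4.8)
The plan is to feed the three identities already recorded above---the decomposition \cref{eq:varSigmaPM:Decomposition} and the Myerson--Zhabitskaya relations \cref{eq:Length:Vs:OddPartialQuotientSum}, \cref{eq:Length:Vs:EvenPartialQuotientSum}---into \cref{thm:Zhabitskaya:RestrictedVariant} and Zhabitskaya's formula \cref{Zhabitskaya}. Solving \cref{eq:Length:Vs:OddPartialQuotientSum} and \cref{eq:Length:Vs:EvenPartialQuotientSum} for $\varSigma_{\text{odd}}(x)$ and $\varSigma_{\text{even}}(x)$ and substituting into \cref{eq:varSigmaPM:Decomposition}, one gets for every $x\in\mathscr{F}_0(Q)$ the pointwise identity
\[
	\varSigma_\pm(x) = \varSigma_{\text{odd}}(x) - \varSigma_{\text{even}}(x) = \bigl(\ell(x)+\epsilon(x)\bigr) - \bigl(\ell(1-x)-\epsilon(x)\bigr) = \ell(x) - \ell(1-x) + 2\epsilon(x).
\]
Summing over $x\in\mathscr{F}_0(Q)$ and using $0\le\epsilon(x)\le 1$ together with $\#\mathscr{F}_0(Q)\le\#\mathscr{F}(Q)$, the contribution of the $\epsilon(x)$ is $O\bigl(\#\mathscr{F}(Q)\bigr)$, hence $O(1)$ after dividing by $\#\mathscr{F}(Q)$; crucially, this means we never need the precise description of $\epsilon(x)$.

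Next I would reorganise the term $\sum_{x\in\mathscr{F}_0(Q)}\ell(1-x)$. Since $x\mapsto 1-x$ maps $\mathscr{F}_0(Q)=\mathscr{F}(Q)\cap[0,1/2)$ bijectively onto $\mathscr{F}(Q)\cap(1/2,1]$, one has
\[
	\sum_{x\in\mathscr{F}_0(Q)}\ell(1-x) = \sum_{\substack{y\in\mathscr{F}(Q)\\ y>1/2}}\ell(y) = \sum_{y\in\mathscr{F}(Q)}\ell(y) - \sum_{x\in\mathscr{F}_0(Q)}\ell(x) - \ell(1/2),
\]
with $\ell(1/2)=O(1)$. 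Combining the last two displays gives
\[
	\frac{1}{\#\mathscr{F}(Q)}\sum_{x\in\mathscr{F}_0(Q)}\varSigma_\pm(x) = \frac{2}{\#\mathscr{F}(Q)}\sum_{x\in\mathscr{F}_0(Q)}\ell(x) - \frac{1}{\#\mathscr{F}(Q)}\sum_{y\in\mathscr{F}(Q)}\ell(y) + O(1).
\]

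Finally, I would insert \cref{thm:Zhabitskaya:RestrictedVariant} into the first average on the right and Zhabitskaya's formula \cref{Zhabitskaya} into the second. The quadratic terms cancel because $2c_1=C_1$; the constants $C_3$ and all remainders ($O(1)$, resp.\ $O((\log Q)^6/Q)$) are swallowed by $O(1)$; and the surviving coefficient of $\log Q$ equals $2c_2-C_2=3/4$ by the explicit value $c_2=C_2/2+3/8$ stated in \cref{thm:Zhabitskaya:RestrictedVariant}. This yields the asserted asymptotic. I do not anticipate a real obstacle: the argument is a short deduction, and the only points that call for a little care are the set-theoretic bookkeeping at the endpoints $0,1/2,1$ of the Farey dissection and the remark that the (otherwise uncontrolled) correction terms $\epsilon(x)$ affect only the bounded part of the estimate.
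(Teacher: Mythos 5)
Your argument is correct and is essentially the paper's own proof: both rest on feeding \cref{eq:Length:Vs:OddPartialQuotientSum}--\cref{eq:Length:Vs:EvenPartialQuotientSum} and \cref{eq:varSigmaPM:Decomposition} into \cref{thm:Zhabitskaya:RestrictedVariant} and \cref{Zhabitskaya}, with the quadratic terms cancelling via $2c_1=C_1$ and the coefficient $2c_2-C_2=3/4$ surviving. The only cosmetic difference is that you combine the identities pointwise into $\varSigma_\pm(x)=\ell(x)-\ell(1-x)+2\epsilon(x)$ before summing, whereas the paper averages $\varSigma_{\text{odd}}$ and $\varSigma_{\text{even}}$ separately and subtracts at the end.
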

\begin{proof}
	From~\cref{eq:Length:Vs:OddPartialQuotientSum} and \cref{thm:Zhabitskaya:RestrictedVariant} we deduce that
	\[
		\frac{1}{\#\mathscr{F}(Q)}\sum_{x\in\mathscr{F}_0(Q)} \varSigma_{\text{odd}}(x)
		= c_1 (\log Q)^2 + c_2 \log Q + O(1).
	\]
	Moreover, by~\cref{eq:Length:Vs:EvenPartialQuotientSum},
	\[
		\sum_{x\in\mathscr{F}_0(Q)} \varSigma_{\text{even}}(x)
		= \sum_{x\in\mathscr{F}_0(Q)} \ell(1-x) + O(Q^2)
		= \sum_{x\in\mathscr{F}(Q)\setminus\mathscr{F}_0(Q)} \ell(x) + O(Q^2).
	\]
	On the other hand, \cref{Zhabitskaya} and \cref{thm:Zhabitskaya:RestrictedVariant} show that, after dividing by $\#\mathscr{F}(Q)$, the right hand side in the above is
	\[
		(C_1-c_1) (\log Q)^2 + (C_2-c_2) \log Q + O(1).
	\]
	In view of~\cref{eq:varSigmaPM:Decomposition}, the result follows from the previous considerations.
\end{proof}

\begin{proof}[Proof of \cref{thm:Goal}]
	Clearly it suffices to prove~\cref{eq:Ito:AsymptoticVersion}.
	To this end, observe that, by~\cref{Hickerson},  we have $D(x) = \varSigma_\pm(x) / 12 + O(1)$.
	Now~\cref{eq:Ito:AsymptoticVersion} follows immediately from this and \cref{cor:SigmaPmAsymptotics}.
\end{proof}

\section{Proof of \texorpdfstring{\cref{thm:Zhabitskaya:RestrictedVariant}}{Theorem\autoref{thm:Zhabitskaya:RestrictedVariant}}{}}
\label{sec:MainThm:SketchOfProof}
Before stating the key lemmas needed for the proof of \cref{thm:Zhabitskaya:RestrictedVariant}, we give a short informal sketch of the overall argument.
In~\cref{subsec:keylemmas} we state the three key lemmas we require.
The proof of \cref{thm:Zhabitskaya:RestrictedVariant} is given in~\cref{subsection:proofoftheorem}.

\subsection{Sketch of the proof}\label{subsec:sketchoftheproof}
In proving \cref{thm:Zhabitskaya:RestrictedVariant}, we adapt the approach of Zhabitskaya~\cite{zhabitskaya2009average-length}.
The idea, which goes back to Lochs~\cite{lochs1961statistik} and Heilbronn~\cite{heilbronn1968average-length}, is to transfer the problem of computing the (restricted) average of the lengths of (minus) continued fractions into a problem of counting lattice points inside certain regions.
By virtue of \cref{lem:TheSumOfLengths} and \cref{lem:reducedN4system} (below), the proof of \cref{thm:Zhabitskaya:RestrictedVariant} boils down to evaluating asymptotically the number of integer solutions of the system
\begin{equation}\nonumber
	\left\lbrace\begin{array}{@{}ll@{}}
		\gcd(p,q) = 1, &
		p,q\geq1, \\
		\inv_p(q)\leq p/2,\\
		2 \leq n q + kp \leq Q, &
		1\leq k<n.
	\end{array}\right.
\end{equation}
This amounts to counting the lattice points inside some region subject to some coprimality condition and the additional restriction $\inv_p(q)\leq q/2$.
The latter restriction is not present in~\cite{zhabitskaya2009average-length} and complicates the overall analysis.
Following~\cite{zhabitskaya2009average-length}, we split the problem of counting the solutions to the above system into five sub-cases.
For every case we have to count lattice points with certain properties inside regions (see~\cref{subsection:proofoftheorem} for the details).
This counting problem is solved in \cref{prop:CountingSolutionsToSystem} and it should be apparent from the proof of \cref{prop:CountingSolutionsToSystem} that the reason for the bias ($2c_2 > C_2$) in \cref{thm:Zhabitskaya:RestrictedVariant} is found within two of the considered cases.
More specifically, for one of these cases, the number of lattice points to be counted is given, up to some error term, by
\[
	\sum_{q<Q^{1/4}} \frac{1}{q} \sum_{\substack{q/2<b\leq q\\ \gcd(b,q)=1}}\frac{1}{q}\log \frac{Q^{1/2}}{q^2}
	= \sum_{q<Q^{1/4}} \frac{1}{q^2}\log \frac{Q^{1/2}}{q^2} \delta^{+}(q),
\]
where $\delta^{+}$ is the function appearing in \cref{lem:Euler'sPhiInHalfIntervals}.
The same procedure carried out for fractions greater than $1/2$ leads to the same expression with $\delta^{+}$ being replaced by $\delta^{-}$.
As \cref{lem:Euler'sPhiInHalfIntervals} shows, the functions $\delta^{+}$ and $\delta^{-}$ agree everywhere except at $1$ and $2$; this is the reason for $2c_2 > C_2$.

\subsection{Four lemmas}\label{subsec:keylemmas}
Each of the following lemmas plays a crucial r{\^o}le in the proof of \cref{thm:Zhabitskaya:RestrictedVariant}.
In fact, in spite of its simplicity, \cref{lem:inversiontrick}  turns out to be particularly useful in establishing \cref{prop:CountingSolutionsToSystem}: it permits a simple, yet important modification of the considered systems, allowing us to evaluate $R_3(U)$ and $R_5(U)$ (to be defined below) with the required precision (see~\cref{sec:NumberOfSolutions:RQ} for details).
The relevance of \cref{lem:Euler'sPhiInHalfIntervals} as the source of bias was already explained in \cref{subsec:sketchoftheproof}.
\cref{lem:TheSumOfLengths} and \cref{lem:reducedN4system} are adapted from \cite[Lemma~2 in \textsection~2.3]{zhabitskaya2009average-length} and allow us to translate our problem into the enumeration of the solutions of a system of inequalities (see \cref{eq:TheSystem:CC}).

\begin{lem}[Inversion trick]\label{lem:inversiontrick}
	Let $p,q\geq 2$ be two coprime integers. Then
	\[
		\inv_{p}(q)\leq \frac{p}{2}
		\quad\text{if and only if}\quad
		\inv_{q}(p)  >  \frac{q}{2}.
	\]
\end{lem}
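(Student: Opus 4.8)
The plan is to reduce the statement to an explicit, symmetric Bézout identity linking the two modular inverses. First I would set $a \coloneqq \inv_{p}(q)$, so that $1 \le a \le p-1$ and $p \mid qa - 1$. Putting $b \coloneqq (qa-1)/p$ then produces an integer satisfying the relation $qa - bp = 1$. A one-line estimate shows $1 \le b \le q-1$: since $q \ge 2$ and $a \ge 1$ we have $qa - 1 \ge 1$, and as $p \mid qa-1$ with $qa - 1 > 0$ this forces $b \ge 1$; on the other hand $qa - 1 \le q(p-1) - 1 < qp$ forces $b < q$. Reducing $qa - bp = 1$ modulo $q$ gives $p\cdot(-b) \equiv 1 \pmod q$, and because $1 \le q - b \le q-1$ this identifies $\inv_{q}(p) = q - b$.

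With these facts in hand the lemma becomes a short chain of equivalences governed by the relation $qa = bp + 1$. On one side, $\inv_{p}(q) \le p/2 \iff 2a \le p \iff 2qa \le pq \iff 2bp \le pq - 2$. On the other side, $\inv_{q}(p) > q/2 \iff q - b > q/2 \iff 2b \le q-1 \iff 2bp \le pq - p$. So it only remains to observe that for the multiple $2bp$ of $p$ (with $p \ge 2$) one has $2bp \le pq - 2$ if and only if $2bp \le pq - p$: the implication "$\Leftarrow$" is trivial since $pq - p \le pq - 2$, and "$\Rightarrow$" holds because an integer $2b$ with $2bp \le pq - 2$ satisfies $2b \le q - 2/p$, and since $2/p \le 1$ with $2b$ and $q$ integers this already gives $2b \le q - 1$, i.e. $2bp \le pq - p$.

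I do not anticipate a genuine obstacle; the only point requiring a moment's care is the potential boundary behaviour ($2a = p$ or $2b = q$), but the "multiple of $p$" step above sidesteps it completely, as it is precisely the device bridging the strict and non-strict inequalities. If one preferred an alternative, one could note that $qa - bp = 1$ immediately yields $\gcd(a,p) = \gcd(b,q) = 1$, so such equalities could occur only when $p = 2$ or $q = 2$, cases easily checked by hand; but this detour is unnecessary. In the write-up the whole argument will amount to fixing the identity $qa - bp = 1$ with $\inv_{q}(p) = q - b$, the four displayed equivalences, and the one-line remark on multiples of $p$.
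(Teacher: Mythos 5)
Your proof is correct and rests on exactly the same key identity as the paper's: your relation $qa - bp = 1$ with $\inv_q(p) = q-b$ is precisely the paper's $\inv_p(q)\,q + \inv_q(p)\,p = 1 + pq$, which the paper derives via a congruence modulo $pq$ plus a size argument. The only difference is that you spell out the concluding integrality step (bridging the strict and non-strict inequalities) that the paper compresses into ``from which the lemma follows''.
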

\begin{proof}
	By coprimality, there are integers $a$ and $b$ such that $aq + bp = 1$, where $a = \inv_p(q) + tp$ and $b = \inv_q(p) + sq$ for some integers $s$ and $t$.
	Hence
	\[
		\inv_p(q)q + \inv_q(p)q - qp \equiv 1 \bmod pq.
	\]
	On the other hand, the left hand side of the above is contained in the interval $(-pq,pq)$.
	Hence, we conclude 
	\[
		\inv_p(q)q + \inv_q(p)p = 1 + pq,
	\]
	from which the lemma follows.
\end{proof}

\begin{lem}\label{lem:Euler'sPhiInHalfIntervals}
	Let $\varphi$ be Euler's totient function and define for every positive integer $q$ the counting functions
	\[
		\delta^-(q) = \sum_{\substack{ b \leq q/2 \\ \gcd(b,q)=1 }} 1
		\quad\text{and}\quad
		\delta^+(q) = \sum_{\substack{ q/2 < b \leq q \\ \gcd(b,q)=1 }} 1.
	\]
	Then the following assertions hold:
	\begin{enumerate}
		\item $\delta^+(1) = \delta^-(2) = 1$;
		\item $\delta^+(2) = \delta^-(1) = 0$;
		\item $\delta^+(q) = \delta^-(q) = \varphi(q) / 2$ for $q \geq 3$.
	\end{enumerate}
\end{lem}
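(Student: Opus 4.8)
The plan is elementary and follows the trichotomy in the statement. For parts (i) and (ii) I would simply evaluate the two counting functions by hand. When $q = 1$ the only positive integer $b$ with $b \le q$ and $\gcd(b,q) = 1$ is $b = 1$, and since $1 > q/2 = 1/2$ it is counted by $\delta^+$ but not by $\delta^-$, whence $\delta^+(1) = 1$ and $\delta^-(1) = 0$. When $q = 2$ the only such $b$ is again $b = 1$; now $1 \le q/2 = 1$, so it is counted by $\delta^-$ but not by $\delta^+$ (the remaining candidate $b = 2$ has $\gcd(2,2) = 2 \neq 1$), whence $\delta^-(2) = 1$ and $\delta^+(2) = 0$.

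For part (iii) I would fix $q \ge 3$ and exploit the involution $\iota\colon b \mapsto q - b$ on the set $R_q = \{\, b \in \ZZ : 1 \le b \le q,\ \gcd(b,q) = 1 \,\}$. This is well defined because $\gcd(q - b, q) = \gcd(b, q)$, it has $\varphi(q)$ elements, and it does not contain $b = q$ since $q \ge 2$. The one point deserving attention is the midpoint $b = q/2$: it can occur only when $q$ is even, and then $q \ge 4$ forces $\gcd(q/2, q) = q/2 \ge 2$, so $q/2 \notin R_q$. Consequently the sums defining $\delta^-(q)$ and $\delta^+(q)$ count, respectively, the elements of $R_q$ lying strictly below and strictly above $q/2$, and these two subsets partition $R_q$. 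Since $b < q/2$ if and only if $q - b > q/2$, the involution $\iota$ restricts to a bijection between the two subsets, so $\delta^-(q) = \delta^+(q)$; combined with $\delta^-(q) + \delta^+(q) = \varphi(q)$ this gives $\delta^\pm(q) = \varphi(q)/2$.

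I do not anticipate any genuine obstacle here. The only subtlety is that the cut at $q/2$ is compatible with the symmetry $b \mapsto q - b$ of the reduced residue system precisely when $q \ge 3$; for $q \in \{1, 2\}$ that symmetry degenerates (for $q = 1$ the partner $q - b = 0$ falls outside the range, and for $q = 2$ the unique reduced residue $b = 1$ sits on the boundary and is its own partner), which is exactly why those two small moduli must be handled separately and produce the exceptional values recorded in (i) and (ii).
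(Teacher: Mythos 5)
Your proof is correct and follows essentially the same route as the paper: the small moduli $q\le 2$ are checked directly, and for $q\ge 3$ the involution $b\mapsto q-b$ gives a bijection between the two halves of the reduced residue system, which together exhaust all $\varphi(q)$ elements. Your treatment is merely more explicit about why the boundary points $b=q/2$ and $b=q$ do not interfere.
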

\begin{proof}
	The assertions for $q\leq 2$ are trivial to check.
	For $q\geq 3$ note that the sets
	\[
		\set{ 1 \leq b \leq q/2 }[ \gcd(b,q)=1 ]
		\quad\text{and}\quad
		\set{ q/2 < b < q }[ \gcd(b,q)=1 ]
	\]
	are disjoint and in bijection by means of the map $b\mapsto q-b$.
	As the union of both sets contains exactly $\varphi(q)$ elements, we are done.
\end{proof}

\begin{lem}\label{lem:TheSumOfLengths}
	The sum $N_0(Q)$ of the lengths of the minus continued fraction expansions of the numbers $a/q$ with $1\leq a < q/2$, $q\leq Q$ is
	\[
		N_0(Q) = T_0(Q) + O(Q^2),
	\]
	where $T_0(Q)$ denotes the number of solutions $(a_1,q_1,a_2,q_2,m,n,a,b)\in\NN^8$ to the following system of equalities and inequalities:
	\begin{gather}\label{eq:TheSystem:1}
		\left\lbrace\begin{array}{@{}lll@{}}
			a_1q_2 - a_2 q_1 = 1, &
			1\leq a_1 \leq q_1, &
			1 \leq a_2 \leq  q_2/2, \\
			n a_2 - m a_1 = a, &
			n q_2 - m q_1 = b, &
			1 \leq a < b \leq Q, \\
			1 \leq m < n, &
			1\leq q_1 < q_2.
		\end{array}\right.
	\end{gather}
\end{lem}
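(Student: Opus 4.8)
The plan is to establish a correspondence between the tuples counted by $T_0(Q)$ and the pairs $(a/q,j)$, where $a/q$ is a fraction with $1\le a<q/2$ and $q\le Q$ --- \emph{not} assumed in lowest terms --- and $j$ is an index with $2\le j\le\ell(a/q)$ into the minus continued fraction expansion~\cref{eq:RegularContinuedFractionExpansion} of $a/q$. Since $a/q\in(0,1/2)$ forces the first partial quotient of that expansion to equal $2$, whence $\ell(a/q)\ge2$, once the correspondence is shown to be a bijection we obtain $T_0(Q)=\sum_{q\le Q}\sum_{1\le a<q/2}(\ell(a/q)-1)=N_0(Q)-\sum_{q\le Q}(\ceil{q/2}-1)$, and the last sum is $\ll Q^2$; that is the assertion.

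First I would record the standard facts about the minus continued fraction $a/q=\rcfrac{1;b_1,\ldots,b_m}$ (all $b_i\ge2$, $m=\ell(a/q)$) and its convergents $h_i/k_i=\rcfrac{1;b_1,\ldots,b_i}$, normalised by $h_{-1}/k_{-1}=1/0$ and $h_0/k_0=1/1$: the recurrences $h_i=b_ih_{i-1}-h_{i-2}$ and $k_i=b_ik_{i-1}-k_{i-2}$; the determinant identity $h_{i-1}k_i-h_ik_{i-1}=1$, constant in $i$ by induction; the monotonicity $1=k_0<k_1<\cdots<k_m=q$ and $1=h_0\le h_1\le\cdots$, together with the decrease of the convergents, $1=h_0/k_0>h_1/k_1>\cdots>h_m/k_m=a/q$; and the complete-quotient identity $a/q=(h_{i-1}\varrho_i-h_{i-2})/(k_{i-1}\varrho_i-k_{i-2})$ with $\varrho_i=\rcfrac{b_i;b_{i+1},\ldots,b_m}>1$; all of these can be found in~\cite{Perron}. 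The hypothesis $a<q/2$ enters only via $0<a/q<1/2\Rightarrow b_1=2\Rightarrow h_1/k_1=1/2$ (and hence $\ell(a/q)\ge2$). I would then define the map by $(a/q,j)\mapsto(a_1,q_1,a_2,q_2,m,n,a,b)\coloneqq(h_{j-2},k_{j-2},h_{j-1},k_{j-1},g\beta_j,g\alpha_j,a,q)$, where $g=\gcd(a,q)$ and $\varrho_j=\alpha_j/\beta_j$ in lowest terms, and check~\cref{eq:TheSystem:1} line by line: $a_1q_2-a_2q_1=1$ is the determinant identity at $i=j-1$; the inequalities $1\le a_1\le q_1<q_2$ and $1\le a_2\le q_2/2$ follow from $0<h_i/k_i\le1$ for $i\ge0$, from $h_i/k_i\le1/2$ for $i\ge1$, and from the monotonicity of the $k_i$; $1\le m<n$ holds because $\varrho_j>1$; the equations $na_2-ma_1=a$ and $nq_2-mq_1=b$ arise by substituting $\varrho_j=\alpha_j/\beta_j$ in the complete-quotient identity, clearing $\beta_j$, and scaling by $g$; and $1\le a<b\le Q$ is immediate from $1\le a<q/2<q=b\le Q$. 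Injectivity is then clear: $(a,b)$ recovers $a/q$ together with its expansion, and $q_2=k_{j-1}$ recovers $j$ by the strict monotonicity of the $k_i$.

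The substance of the proof is surjectivity. Given a solution of~\cref{eq:TheSystem:1}, the determinant equation forces $\gcd(a_2,q_2)=1$, so $a_2/q_2\in(0,1)$ has a unique minus continued fraction $\rcfrac{1;c_1,\ldots,c_k}$, with convergents $h'_i/k'_i$; from $a_2/q_2\le1/2$ one gets $c_1=2$. Comparing $h'_{k-1}k'_k-h'_kk'_{k-1}=1$ (with $h'_k/k'_k=a_2/q_2$) against $a_1q_2-a_2q_1=1$, and invoking $0<k'_{k-1}<k'_k=q_2$ together with $1\le q_1<q_2$, forces $k'_{k-1}=q_1$ and $h'_{k-1}=a_1$. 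Eliminating variables using the determinant equation shows that $\gcd(a,b)$ and $\gcd(m,n)$ coincide; write $g$ for this common value and $n/m=\alpha/\beta$ in lowest terms. Then the complete-quotient identity shows that the reduced fraction $a/b$ equals $\rcfrac{1;c_1,\ldots,c_k,d_1,\ldots,d_s}$, where $\rcfrac{d_1;d_2,\ldots,d_s}$ is the minus continued fraction of $\alpha/\beta>1$ (a legitimate concatenation, all partial quotients being $\ge2$). Hence $\ell(a/b)=k+s\ge k+1$, the first partial quotient is $c_1=2$ so $a/b<1/2$, and, unwinding the definitions, the given solution is exactly the image of the pair $(a/b,\,k+1)$ --- with denominator $b$, again not necessarily in lowest terms. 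So the map is onto, and together with injectivity it is a bijection.

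The step I expect to be the main obstacle is making this last argument watertight: one must check that clearing denominators in the complete-quotient identity genuinely produces the \emph{unique} minus continued fraction of $a/b$, with the predicted prefix $c_1,\ldots,c_k$ and tail, rather than a variant, and one must dispatch the degenerate cases --- $k=1$ (i.e.\ $j=2$, where $(a_1,q_1,a_2,q_2)=(1,1,1,2)$), $\alpha/\beta$ integral, and $g>1$. Granting this, the bijection yields $T_0(Q)=\sum_{q\le Q}\sum_{1\le a<q/2}(\ell(a/q)-1)=N_0(Q)-\sum_{q\le Q}(\ceil{q/2}-1)$, and $\sum_{q\le Q}(\ceil{q/2}-1)\ll Q^2$, giving $N_0(Q)=T_0(Q)+O(Q^2)$. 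This is the minus-continued-fraction counterpart of \cite[Lemma~2 in \textsection~2.3]{zhabitskaya2009average-length}; the genuinely new feature is the constraint $a_2\le q_2/2$, which corresponds precisely to $b_1=2$.
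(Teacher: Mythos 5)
Your argument is correct and is essentially the proof the paper intends: the paper disposes of this lemma by citing \emph{mutatis mutandis} the convergent-counting bijection of \cite[Lemma~2, pp.~1185--1186]{zhabitskaya2009average-length}, and what you have written is precisely that bijection (consecutive minus-continued-fraction convergents plus complete-quotient data $\leftrightarrow$ solutions of~\cref{eq:TheSystem:1}), adapted to the range $a<q/2$ via the observation that this forces $b_1=2$, hence $a_2\leq q_2/2$. The points you flag as delicate (uniqueness of the concatenated expansion, the cases $k=1$, $n/m$ integral, $g>1$) all check out routinely, so there is no gap.
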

\begin{proof}
	The claim follows \emph{mutatis mutandis} from~\cite[p.~1185--1186]{zhabitskaya2009average-length}.
\end{proof}

Next, discarding an acceptable number of solutions in the process, we reduce the system~\cref{eq:TheSystem:1} to a system with four variables. 

\begin{lem}\label{lem:reducedN4system}
	Let $R(Q)$ denote the number of solutions $(p, q, n, m) \in \NN^4$ of the system
	\begin{gather}\label{eq:TheSystem:CC}
		\left\lbrace\begin{array}{@{}ll@{}}
			\gcd(p,q) = 1, &
			p,q \geq 1, \\
			\inv_p(q) \leq p/2, \\
			2 \leq n q + kp \leq Q, &
			1 \leq k < n.
		\end{array}\right.
	\end{gather}
	Then, the number $N_0(Q)$ defined as in \cref{lem:TheSumOfLengths} satisfies
	\begin{align*}
		N_0(Q) = R(Q) + O(Q^2).
	\end{align*}
\end{lem}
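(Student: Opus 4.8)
The plan is to start from the system~\cref{eq:TheSystem:1} and perform a change of variables which eliminates $a_1,q_1,a_2,q_2,a,b$ in favour of the four quantities $(p,q,n,m)$, where one sets $p = a_2$, $q = q_2$, and reads off $a_1,q_1$ as being essentially determined modulo $p,q$ by the unimodular relation $a_1 q_2 - a_2 q_1 = 1$. First I would observe that the relation $a_1 q_2 - a_2 q_1 = 1$ forces $\gcd(a_2,q_2)=1$ (so $\gcd(p,q)=1$), and that, given the coprime pair $(a_2,q_2)=(p,q)$, the pair $(a_1,q_1)$ subject to $1\le a_1\le q_1$ is \emph{not} unique: the general solution is $(a_1,q_1) = (a_1^{(0)} + t p,\; q_1^{(0)} + t q)$ for $t\in\ZZ$, and the constraint $1\le a_1\le q_1$ together with $1\le q_1<q_2$ pins down a bounded number of admissible $t$. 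The key point is that among these, the representative with $1\le a_1 \le p/2$-type normalisation corresponds exactly to $a_1 = \inv_p(q)$ (up to the usual off-by-one care), because $a_1 q \equiv 1 \pmod p$ reading the defining equation modulo $p$; hence the condition $1\le a_2\le q_2/2$, i.e. $p\le q/2$ — wait, rather it is the condition on $a_2$ relative to $q_2/2$ that must be re-expressed. Concretely, $1\le a_2\le q_2/2$ reads $p\le q/2$; but via \cref{lem:inversiontrick} applied to the coprime pair $(p,q)$ this is equivalent to a statement about $\inv_q(p)$, and after the substitution it is the condition $\inv_p(q)\le p/2$ in~\cref{eq:TheSystem:CC} that survives. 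I would make this correspondence precise by carefully tracking which of $a_1,a_2$ plays the role of the inverse modulo which modulus, using the two readings of $a_1 q_2 - a_2 q_1 = 1$ modulo $q_1$ and modulo $q_2$ respectively.

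Next I would handle the remaining variables. Writing $b = n q_2 - m q_1$ and $a = n a_2 - m a_1$, and using $a_1 q_2 - a_2 q_1 = 1$ to note $a_1 b - q_1 a = (a_1 q_2 - a_2 q_1)(n\cdot? )$ — more cleanly, the linear map $(n,m)\mapsto(a,b)$ has matrix $\begin{pmatrix} a_2 & -a_1 \\ q_2 & -q_1\end{pmatrix}$ of determinant $a_2 q_1 - a_1 q_2 = -1$ — wait sign: it is $-(a_1 q_2 - a_2 q_1) = -1$, so this map is a bijection $\ZZ^2\to\ZZ^2$. Thus $(a,b)$ ranges over all integer pairs as $(n,m)$ does, and the constraint $1\le a<b\le Q$ becomes a linear constraint on $(n,m)$; one checks it is equivalent (up to $O(Q^2)$ many exceptional solutions, as in the statement) to $2 \le nq + kp \le Q$ with $1\le k<n$ after the relabelling — here $k$ plays the role of $m$ and the shape $nq+kp$ arises as $b + (\text{something})$. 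The inequalities $1\le m<n$ and the coprimality are preserved directly. Throughout, whenever a substitution is not an exact bijection (e.g. the finitely many choices of $t$ fixing $(a_1,q_1)$, or boundary fractions with $a/q$ near $0$ or $1/2$, or the off-by-one terms hidden in $\inv_p(q)\le p/2$ versus $1\le a_2\le q_2/2$), I would bound the discrepancy by $O(Q^2)$: the number of $(p,q)$ with $1\le p<q\le Q$ is $O(Q^2)$, and each contributes $O(1)$ such exceptional tuples, or at worst the count of $(p,q,n,m)$ with $nq+kp$ of size $O(Q)$ and one extra equality imposed is $O(Q^2)$ by a divisor-type estimate.

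The main obstacle will be the bookkeeping around the \emph{non-uniqueness} of $(a_1,q_1)$ given $(p,q)=(a_2,q_2)$ and the precise translation of the halving condition $1\le a_2\le q_2/2$ into $\inv_p(q)\le p/2$. One must verify that each solution of~\cref{eq:TheSystem:1} yields exactly one solution of~\cref{eq:TheSystem:CC} and conversely (outside an $O(Q^2)$ exceptional set), and that the inequality on $a_2$ is equivalent to the inequality on $\inv_p(q)$ — this is precisely where \cref{lem:inversiontrick} enters, converting a constraint on one member of the coprime pair into a constraint on the modular inverse of the other. Since the corresponding reduction without the halving condition is carried out in Zhabitskaya~\cite[\textsection 2.3]{zhabitskaya2009average-length}, I would present this as following \emph{mutatis mutandis} from that reference, with \cref{lem:inversiontrick} supplying the one genuinely new ingredient needed to carry the constraint $1\le a_2\le q_2/2$ through the change of variables; the error term $O(Q^2)$ is of the same order as the one already incurred in passing from $N_0(Q)$ to $T_0(Q)$ in \cref{lem:TheSumOfLengths}, so it is harmless.
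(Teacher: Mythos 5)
Your high-level strategy --- eliminate the redundant variables, carry the halving condition through the change of variables via \cref{lem:inversiontrick}, and absorb the mismatches into an $O(Q^2)$ error --- is the right one, but two of your concrete choices would derail the argument. First, the identification $p=a_2$, $q=q_2$ cannot produce the system~\cref{eq:TheSystem:CC}: under it, $b=nq_2-mq_1$ does not take the form $nq+kp$ (the dependence on $q_1$ does not disappear, and $q_1$ is not $\pm p$), and it imports the size constraint $p\le q/2$, which is absent from~\cref{eq:TheSystem:CC} --- that system has many solutions with $p>q$, and these are essential in the later case analysis. The substitution that works is $p=q_1$, $q=q_2-q_1$, $k=n-m$, under which $b=nq_2-mq_1=n(q+p)-(n-k)p=nq+kp$ identically and $1\le m<n$ becomes $1\le k<n$; your vague ``$nq+kp$ arises as $b+(\text{something})$'' papers over exactly the computation that fails with your choice of variables.

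Second, your translation of the halving condition is not sound: $1\le a_2\le q_2/2$ is a \emph{size} condition, and \cref{lem:inversiontrick} says nothing about the relative sizes of $p$ and $q$; it only exchanges $\inv_p(q)\le p/2$ for $\inv_q(p)>q/2$. The mechanism that actually produces the constraint $\inv_p(q)\le p/2$ is different: reducing $a_1q_2-a_2q_1=1$ modulo $q_1$ shows $a_1=\inv_{q_1}(q_2)$, and for $q_1\ge2$ the determinant equation of~\cref{eq:TheSystem:1} combined with $a_2\le q_2/2$ forces $a_1\le q_1/2$; since $q\equiv q_2\pmod{q_1}$, this is precisely $\inv_p(q)\le p/2$. (The solutions with $q_1=1$ violate this bound and must be discarded separately; they contribute $O(Q^2)$.) The inversion trick is needed only in the converse direction: starting from a solution of~\cref{eq:TheSystem:CC} one reconstructs $a_2=q_2-\inv_{q_2}(q_1)$ and must check $a_2\le q_2/2$, which is where the equivalence $\inv_{q_1}(q_2)\le q_1/2\iff\inv_{q_2}(q_1)>q_2/2$ enters. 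As written, your argument neither recovers the correct reduced system nor justifies the appearance of the condition $\inv_p(q)\le p/2$, so the proposal has a genuine gap rather than being a mere reshuffling of the paper's proof.
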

\begin{proof}
	By virtue of \cref{lem:TheSumOfLengths}, we only need to show that $R(Q) = T_0(Q) + O(Q^2)$.
	It is convenient to exclude the solutions with $q_1=1$ from the discussion.
	We claim that their number is $O(Q^2)$ and, thus, negligible.
	To this end, consider first all the solutions of the system \cref{eq:TheSystem:1} with $q_1=1$.
	The conditions in system~\cref{eq:TheSystem:1} force that $a_1=a_2=q_1=1$ and $q_2=2$, reducing the system to 
	\[
		\left\lbrace\begin{array}{@{}ll@{}}
			n-m=a, &
			2n-m=b, \\
			1\leq a<b\leq Q, &
			1\leq m<n,
		\end{array}\right.
	\]
	for which one easily sees that its number of solutions is $\ll Q^2$.
	
	For the remainder of the proof we shall assume that $q_1\geq 2$.
	We claim that this assumption also implies that $a_1\leq q_1/2$.
	Indeed, suppose to the contrary that there was some solution to~\cref{eq:TheSystem:1} with $q_1\geq 2$ and $a_1>q_1/2$.
	We then deduce that
	\[
		2 = 2(a_1q_2-a_2q_1)
		\geq (q_1+1)q_2-2a_2q_1
		\geq (q_1+1)q_2-q_2q_1
		= q_2
		> q_1,
	\]
	in contradiction with $q_1 \geq 2$.
	
	Upon reducing the equation $a_1q_2-a_2q_1=1$ modulo $q_1$, we obtain $a_1=\inv_{q_1}(q_2)+tq_1$ for some integer $t$.
	As $a_1$ is positive and $q_1<q_2$, it follows that $t$ must vanish.
	Hence, $a_1=\inv_{q_1}(q_2)$.
	Consequently, $\inv_{q_1}(q_2)\leq q_1/2$.
	Now consider the system 
	\begin{equation}\label{eq:TheSystem:11}
		\left\lbrace\begin{array}{@{}lll@{}}
			\gcd(q_1,q_2) = 1, &
			1 \leq q_1 <q_2, & \inv_{q_1}(q_2)\leq q_1/2, \\
			2\leq n q_2 - m q_1 \leq Q, &
			1 \leq m < n.
		\end{array}\right.
	\end{equation}
	We now contend that the map $\Psi$ sending solutions $\boldsymbol{u} = (a_1,q_1,a_2,q_2,m,n,a,b)$ of~\cref{eq:TheSystem:1} with $q_1\geq 2$ to solutions $\boldsymbol{v} = (q_1, q_2, m, n)$ of~\cref{eq:TheSystem:11} (by means of dropping the entries $a_1$, $a_2$, $a$, and $b$) is a bijection.
	Indeed, above we have just seen that this map is well defined.
	To see that it is injective, suppose that $\boldsymbol{v}$ arises from some solution $\boldsymbol{u}$ of~\cref{eq:TheSystem:1}.
	As we have seen, $a_1 = \inv_{q_1}(q_2)$ is already determined by $\boldsymbol{v}$.
	But then, by $a_1q_2-a_2q_1 = 1$, also $a_2$ is determined by $\boldsymbol{v}$.
	Similarly, \cref{eq:TheSystem:1} then yields that also $a$ and $b$ are determined by $\boldsymbol{v}$, showing that $\Psi$ is injective.
	
	To show that $\Psi$ is also surjective, we start out with some solution $\boldsymbol{v} = (q_1, q_2, m, n)$ of~\cref{eq:TheSystem:11} and need to exhibit some preimage of $\boldsymbol{v}$ under $\Psi$.
	As $q_1$ and $q_2$ are coprime, there exist integers $a_1$ and $a_2$ such that $a_1q_2-a_2q_1 = 1$.
	Moreover, by replacing $(a_1,a_2)$ by $(a_1+tq_1,a_2+tq_2)$ with an appropriate integer $t$, we may assume that $0\leq a_1<q_1$.
	Furthermore, define $a = n a_2 - m a_1$ and $b = n q_2 - m q_1$.
	We now show that the octuple $\boldsymbol{u} = (a_1,q_1,a_2,q_2,m,n,a,b)$ is the desired preimage $\boldsymbol{v}$ under $\Psi$.
	We have shown above that $a_1 = \inv_{q_1}(q_2)$.
	Similarly, by reducing $a_1q_2-a_2q_1 = 1$ modulo $q_2$, we find that $a_2 = t_2 q_2 - \inv_{q_2}(q_1)$ for some integer $t_2$.
	We claim that $t_2 = 1$.
	To see this, first observe that
	\begin{equation}\label{eq:CongruenceModq1q2}
		a_1q_2-(q_2 - \inv_{q_2}(q_1))q_1
		\equiv a_1q_2-a_2q_1
		= 1
		\mod q_1q_2.
	\end{equation}
	From~\cref{eq:TheSystem:11} we see that $a_1 = \inv_{q_1}(q_2) \leq q_1/2$ and \cref{lem:inversiontrick} shows that $\inv_{q_2}(q_1) > q_2/2$.
	Therefore,
	\begin{equation}\label{eq:Estimate}
		a_1q_2-(q_2 - \inv_{q_2}(q_1))q_1
		\left\lbrace\begin{array}{@{}l@{}}
			> q_1q_2/2-(q_2 - q_2/2)q_1 = 0, \\
			< q_1q_2.
		\end{array}\right.
	\end{equation}
	Upon combining~\cref{eq:CongruenceModq1q2} and~\cref{eq:Estimate} we infer that the left hand side of~\cref{eq:Estimate} is equal to one and this shows that $a_2 = q_2 - \inv_{q_2}(q_1)$, as claimed.
	In particular, we have $a_2 < q_2/2$.
	Moreover~\cref{eq:TheSystem:11} shows that $b \leq Q$.
	It remains to show that $a < b$.
	We have
	\[
		q_1 a
		= q_1 ( n a_2 - m a_1 )
		= n (a_1q_2-1) - m a_1 q_1
		= a_1 (n q_2 - m q_1) - n
		= a_1 b - n.
	\]
	Using $a_1\leq q_1$, this shows that $a<b$.
	We conclude that $\Psi$ is surjective.
	
	Finally, we transform the system~\cref{eq:TheSystem:11} into the system~\cref{eq:TheSystem:CC} by changing the variables slightly by means of the following map:
	\begin{align*}
		\set{ \text{solutions }(q_1, q_2, m, n)\text{ of~\cref{eq:TheSystem:11}} } &
		\smash{{}\stackrel{1:1}{\longrightarrow}{}} \set{ \text{solutions }(p, q, k, n)\text{ of~\cref{eq:TheSystem:CC}} }, \\
		(q_1, q_2, m, n) &\longmapsto (q_1, q_2-q_1, n-m, m).
	\end{align*}
	This is easily checked to be a bijection; we omit the details.
\end{proof}

\subsection{Proof of \texorpdfstring{\cref{thm:Zhabitskaya:RestrictedVariant}}{Theorem\autoref{thm:Zhabitskaya:RestrictedVariant}}}\label{subsection:proofoftheorem}
In view of \cref{lem:reducedN4system}, it suffices to count the number of solutions of the system
\begin{equation}\label{eq:TheSystem:CCC}
	\left\lbrace\begin{array}{@{}ll@{}}
		\gcd(p,q) = 1, &
		p,q\geq1, \\
		\inv_p(q)\leq p/2, \\
		2 \leq n q + kp \leq Q, &
		1\leq k<n,
	\end{array}\right.
\end{equation}
with an error term of size $O\parentheses*{Q^2}$. 
The reader may notice the similarity between the system~\cref{eq:TheSystem:CCC} and the system \cite[Eq.~(42)]{zhabitskaya2009average-length}: they are almost identical, up to the additional constraints concerning coprimality and modular inversion. 
 Set $U=Q^{1/2}$ and consider the following five cases:
\begin{itemize}\label{enum:CasesExplanation}
	\item $p \leq q \leq U$;             \hfill (`Case~1')
	\item $p \leq q$, $U < q$;           \hfill (`Case~2')
	\item $q < p \leq U$;                \hfill (`Case~3')
	\item $q < p$, $U < p$, $n \leq U$;  \hfill (`Case~4')
	\item $q < p$, $U < p$, $U < n$.     \hfill (`Case~5')
\end{itemize}
Those cases are exactly the five cases appearing in \cite{zhabitskaya2009average-length}. 
The following proposition provides us the asymptotic number of solutions for each single case.

\begin{prop}\label{prop:CountingSolutionsToSystem}
	Suppose that $1\leq i\leq 5$ and let $R_i(U)$ denote the number of solutions to the system~\cref{eq:TheSystem:CCC} subject to the additional constraint that `Case~$i$' be satisfied.
	Then we have
	\begin{enumerate}
		\item $\displaystyle
			R_1(U)=\frac{\log2}{4\zeta(2)}U^4\log U+O\parentheses*{U^4}
		$,
		\item $\displaystyle
			R_2(U)=\frac{\log 2}{4\zeta(2)}U^4\log U+O\parentheses*{U^4}
		$,
		\item $\displaystyle
			R_3(U)=\frac{U^4\parentheses*{\log U}^2}{8\zeta(2)}+\frac{U^4\log U}{4\zeta(2)}\parentheses*{{\gamma}-\frac{\zeta'(2)}{\zeta(2)}+\frac{3\zeta(2)}{4}-\log 2}+O\parentheses*{U^4}
		$,
		\item $\displaystyle
			R_4(U)=\frac{U^4\parentheses*{\log U}^2}{8\zeta(2)}+\frac{U^4\log U}{4\zeta(2)}(\gamma-\log 2)+O\parentheses*{U^4}
		$,
		\item $\displaystyle
			R_5(U)=\dfrac{U^4}{4\zeta(2)}\parentheses*{\log U}^2+\dfrac{U^2}{2\zeta(2)}\parentheses*{\gamma-\dfrac{\zeta'(2)}{2\zeta(2)}-\dfrac{3}{2}{+\,\dfrac{3\zeta(2)}{8}}}\log U+O\parentheses*{U^4}
		$.
	\end{enumerate}
\end{prop}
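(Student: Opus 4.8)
The plan is to treat the five cases one at a time, in each reducing the count $R_i(U)$ of solutions of \cref{eq:TheSystem:CCC} subject to Case~$i$ to a lattice-point count in a planar region that is then estimated by its area plus a controlled boundary term; the overall bookkeeping mirrors that of \cite{zhabitskaya2009average-length}, the modular-inversion hypothesis $\inv_p(q)\le p/2$ being the only genuinely new ingredient. First I would fix, in each case, the two variables that are naturally ``small'' -- namely $p$ and $q$ in Cases~1 and~2 (where $p\le q$), and $q$ together with whichever of $p$, $n$ is constrained below $U$ in Cases~3--5 (where $q<p$). With those fixed, the remaining count is of the pairs $(n,k)$ (or $(p,k)$) subject to $1\le k<n$ and $2\le nq+kp\le Q$; the corresponding region is a triangle, and the number of integer points it contains equals its area $Q^2/(2q(p+q))$ up to a boundary error $\LandauO(Q/q)$. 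The task is then to verify that, summed over the admissible ranges of the outer variables, these boundary errors are $\LandauO(U^4)=\LandauO(Q^2)$; this is exactly what dictates the choice of splitting point $U=Q^{1/2}$ (and, in a couple of the cases, a secondary split at $Q^{1/4}$), and the estimates required here are those already in \cite{zhabitskaya2009average-length}. The coprimality constraint $\gcd(p,q)=1$ is disposed of by Möbius inversion.

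Next I would incorporate the condition $\inv_p(q)\le p/2$. The device here is \cref{lem:inversiontrick}: for $p,q\ge2$ it permits replacing $\inv_p(q)\le p/2$ by $\inv_q(p)>q/2$ (the edge case $p=1$ being vacuous and $q=1$ handled directly), and one or the other form is the convenient one according to whether $p$ or $q$ is the variable being summed over. Once the condition reads $\inv_q(p)>q/2$ with $q$ fixed, it confines $p$ to a prescribed set of residue classes modulo $q$, and by \cref{lem:Euler'sPhiInHalfIntervals} exactly $\delta^+(q)$ of the $\varphi(q)$ classes coprime to $q$ qualify (and $\delta^-(q)$ in the corresponding count for fractions in $(1/2,1)$). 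Since $\delta^+(q)$ and $\delta^-(q)$ agree with $\varphi(q)/2$ except at $q\in\{1,2\}$, this substitution is precisely the point at which the discrepancy between the coefficients on $(0,1/2)$ and on $(1/2,1)$ arises. For Cases~3 and~5 it is essential to perform the swap $p\leftrightarrow q$ \emph{before} summing, for otherwise one is left with a modular-inversion condition on the very variable one wishes to sum over, which one cannot evaluate with the necessary precision -- this is the r\^ole of \cref{lem:inversiontrick} announced in the text preceding it.

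After these reductions, each $R_i(U)$ is -- up to $\LandauO(U^4)$ -- an arithmetic sum of $\delta^\pm(q)/q^2$ against logarithmic weights of the shape $\log(U/q)$ or $\log(U/q^2)$, extending over $q$ alone in Cases~1--4 and over a pair of variables (one reducible to $q$) in Case~5. These I would evaluate by a further Möbius inversion together with partial summation, using the classical asymptotics for $\sum_{q\le x}\varphi(q)/q^2$ and $\sum_{q\le x}\varphi(q)(\log q)/q^2$ and the numerical values of $\zeta(2)$, $\zeta'(2)/\zeta(2)$ and $\gamma$; pushing every expansion to second order yields the constants in the statement. (Assembling $R_1+\dots+R_5$ and feeding the result into \cref{lem:reducedN4system} and \cref{lem:TheSumOfLengths} then gives \cref{thm:Zhabitskaya:RestrictedVariant}, but that step lies beyond the present proposition.)

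The main obstacle, I expect, is the uniform control of the error terms in Cases~4 and~5, where $p$ (respectively $n$) is no longer bounded by $U$ but ranges over a dyadic family of scales; there the crude ``area plus perimeter'' estimate is too lossy, and one must organise the summation with care -- and, at the places where the residue of $p$ modulo $q$ genuinely interacts with the count, invoke a Weil-type bound for incomplete Kloosterman-type sums in order to recover equidistribution of the admissible residues (this being characteristic of the Ustinov method underlying \cite{zhabitskaya2009average-length}). A second and more delicate point is one of precision rather than difficulty: since the entire bias lives in the $\log U$-coefficient, the second-order term of each of the auxiliary sums must be tracked exactly, and the small corrections $\delta^\pm(q)-\varphi(q)/2$ present at $q=1,2$ must be carried through all the Möbius inversions without being swallowed by an error term.
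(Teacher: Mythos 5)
Your proposal follows essentially the same route as the paper's proof: the same five-case lattice-point reduction modelled on Zhabitskaya, \cref{lem:inversiontrick} to move the modular condition onto whichever variable is held fixed, equidistribution on modular hyperbolas (Weil/Kloosterman bounds, the paper's \cref{ModularHyperbola}) where the condition constrains the inverse of the summation variable, and the identification of the bias with $\delta^{\pm}(q)$ at $q\in\set{1,2}$ via \cref{lem:Euler'sPhiInHalfIntervals}. One correction: the coprimality of $p$ and $q$ cannot be ``disposed of by Möbius inversion'' inside the system, since $\inv_p(q)$ is only defined for coprime pairs; the paper instead absorbs coprimality into the hyperbola counts $A_p$, $B_p$ and the $\varphi$-asymptotics, reserving Möbius inversion for the final passage from $N_0(Q)$ to $\mathscr{F}_0(Q)$, which lies outside this proposition.
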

The proof of \cref{prop:CountingSolutionsToSystem} is the most technical part of the paper.
We postpone it until~\cref{sec:NumberOfSolutions:RQ}.

\medskip

\pushQED{\qed}
Assuming the conclusion of \cref{prop:CountingSolutionsToSystem} for the moment, we are now in a position to finish the {\itshape proof of \cref{thm:Zhabitskaya:RestrictedVariant}}.
Indeed, by the above, we find that the number of solutions of the system~\cref{eq:TheSystem:CCC} is equal to
\[
	\dfrac{U^4}{2\zeta(2)}\parentheses*{\log U}^2+\dfrac{U^4}{2\zeta(2)}\parentheses*{2\gamma-\dfrac{\zeta'(2)}{\zeta(2)}-\dfrac{3}{2}{+\,\dfrac{3\zeta(2)}{4}}}\log U+O\parentheses*{U^4}.
\]
Substituting $U=Q^{1/2}$, we conclude for real numbers $Q>0$ which are not squares that
\begin{align}\label{N_OFinal}
	N_0(Q)
	=\dfrac{Q^2}{8\zeta(2)}\parentheses*{\log Q}^2+\dfrac{Q^2}{4\zeta(2)}\parentheses*{2\gamma-\dfrac{\zeta'(2)}{\zeta(2)}-\dfrac{3}{2}{+\dfrac{3\zeta(2)}{4}}}\log Q+O\parentheses*{Q^2},
\end{align}
where $N_0(Q)$ is the quantity described in \cref{lem:TheSumOfLengths}.
To obtain the same result in case $Q$ is a square, it suffices to notice that the asymptotic formula for $N_0(Q+1/2)$ matches~\cref{N_OFinal} up to an error of order $O(Q\log Q)$.
To finish the proof, we still have to restrict to the set $\mathscr{F}_0(Q)$. To this end, notice that by Möbius inversion we have
\begin{align*}
	\sum_{x\in\mathscr{F}_0(Q)} \ell(x) &
	=\mathop{\sum_{b\leq Q}\sum\limits_{a<b/2}}_{\gcd(a,b)=1} \ell\parentheses*{\frac{a}{b}}
	=\sum\limits_{d\leq Q}\mu(d)\mathop{\sum_{b\leq Q/d}\sum\limits_{a<b/2}} \ell\parentheses*{\frac{a}{b}} \\ &
	=\sum\limits_{d\leq Q}\mu(d)N_0\parentheses*{\dfrac{Q}{d}}.
\end{align*}
Hence, we deduce from \cref{MobInverFormula} and~\cref{N_OFinal} that
\[
	\sum_{x\in\mathscr{F}_0(Q)} \ell(x)=\dfrac{Q^2(\log Q)^2}{8\zeta(2)^2}+\dfrac{Q^2\log Q}{4\zeta(2)^2}\parentheses*{2\gamma-\dfrac{3}{2}-2\dfrac{\zeta'(2)}{\zeta(2)}+\,\dfrac{3\zeta(2)}{4}}+O\parentheses*{Q^2}.
\]
This concludes the proof of \cref{thm:Zhabitskaya:RestrictedVariant}.
\popQED

\section{Proof of \texorpdfstring{\cref{prop:CountingSolutionsToSystem}}{Proposition\autoref{prop:CountingSolutionsToSystem}}}
\label{sec:NumberOfSolutions:RQ}

As mentioned in \cref{subsection:proofoftheorem}, we count the solutions of \cref{eq:TheSystem:CCC} in five different cases which are exactly those considered by Zhabitskaya with the additional restrictions on coprimality and modular inversion.
Therefore, in what follows we often refer to the proof of~\cite[Theorem~2]{zhabitskaya2009average-length} as it contains several estimates which we employ directly here to simplify our exposition.

\subsection*{Case 1}
We count the number of solutions $R_1(U)$ of 
\begin{equation}\label{eq:TheSystem:CC1}
    \left\lbrace
        \begin{array}{@{}ll@{}}
            \gcd(p,q) = 1, &
            1\leq p\leq q\leq U, \\
            \inv_p(q)\leq p/2,\\
            2 \leq n q + kp \leq U^2, &
            1\leq k<n.
        \end{array}
    \right.
\end{equation}

If $p$ and $q$ are fixed, then the number of solutions of the above system with respect to the various $
1\leq k<n$ has been shown in \cite[(45)]{zhabitskaya2009average-length} to be equal to
\[
    \Sigma(p,q) \coloneqq \frac{U^4}{2q(p+q)}+E(U,p,q),
\]
where $E(U,p,q)$ is given explicitly in \cite[(45)]{zhabitskaya2009average-length}.
Thus, the number of solutions of \cref{eq:TheSystem:CC1} is equal to
\begin{align}\label{SolutionsCC1}
    \mathop{\sum_{q\leq U}\sum_{p\leq q}}_{\substack{\gcd(p,q)=1\\\inv_{p}(q)\leq p/2}}\Sigma(p,q)
    =\frac{U^4}{2}\mathop{\sum_{p\leq U}\sum_{p\leq q\leq U}}_{\substack{\gcd(p,q)=1\\\inv_{p}(q)\leq p/2}}\frac{1}{q(p+q)}
    +O\parentheses[\Big]{\mathop{\sum_{q\leq U}\sum_{p\leq q}}E(U,p,q)}.
\end{align}
The error term above has been proved in \cite[(45)--(47)]{zhabitskaya2009average-length} to be $O\parentheses*{U^3}$.
It remains to compute the first double sum in the right-hand side of~\cref{SolutionsCC1}.
We deal with the inner sum over $q$ first.
To this end, we set
\[
    f(x)=\frac{1}{x(p+x)},\quad g(x)=\frac{\varphi(p)}{2p}(x-p)\quad\text{and}\quad M(x)=\frac{x}{p^{1/2-\epsilon}}.
\]
Then \cref{eq:Abel'sSummation} and \cref{ModularHyperbola} yield that
\[
	\mathop{\sum_{p\leq q\leq U}}_{\substack{\gcd(p,q)=1\\\inv_{p}(q)\leq p/2}}\frac{1}{q(p+q)}
	\begin{multlined}[t]
		=\frac{\varphi(p)}{2p}\int_p^U\frac{\dd{x}}{x^2+xp} +{} \\
		\qquad\qquad + O_\epsilon\parentheses*{\frac{1}{p^{3/2-\epsilon}}+{\int_p^U\frac{x(2x+p)}{p^{1/2-\epsilon}(x^2+xp)^2}\dd{x}}} \\
		=\frac{\varphi(p)}{2p^2}{\int_{p}^U\parentheses*{\frac{1}{x}-\frac{1}{x+p}}\dd{x}} + O_\epsilon\parentheses*{p^{-3/2+\epsilon}} \hfill \\
		=\frac{\varphi(p)}{2p^2}\log2+O\parentheses*{U^{-1}} + O_\epsilon\parentheses*{p^{-3/2+\epsilon}}. \hfill\phantom.
	\end{multlined}
\]
We now take $\epsilon=1/3$ (any $\epsilon<1/2$ would do) and sum the above terms over $p\leq U$.
Our choice of $\epsilon$ ensures that the sum over the error terms remains bounded.
In view of \cref{lem:Euler'sPhiAsymptotics}~\cref{eq:Euler'sPhiAsymptotics3}, we conclude that
\begin{align}\label{eq:Case1Sum}
    \mathop{\sum_{p\leq U}\sum_{p\leq q\leq U}}_{\substack{\gcd(p,q)=1\\\inv_{p}(q)\leq p/2}}\frac{1}{q(q+p)}=\frac{\log 2}{2}\sum_{p\leq U}\frac{\varphi(p)}{p^2}+O(1)=\frac{\log 2}{2\zeta(2)}\log U+O(1).
\end{align}
For later use, observe also that the relation
\begin{align}\label{eq:RemarkforCase3}
    \mathop{\sum_{q< U}\sum_{q< p\leq U}}_{\substack{\gcd(p,q)=1\\\inv_{q}(p)>q/2 }}\frac{1}{p(q+p)}=\frac{\log 2}{2\zeta(2)}\log U+O(1)
\end{align}
can be derived in the same way as relation \cref{eq:Case1Sum} was.
Finally, upon combining~\cref{SolutionsCC1} with~\cref{eq:Case1Sum}, we conclude that
\begin{align*}
    R_1\parentheses*{U}=\frac{\log2}{4\zeta(2)}U^4\log U+O\parentheses*{U^4}.
\end{align*}

\subsection*{Case 2}
We count the number of solutions $R_2(U)$ of
\begin{gather}\label{eq:TheSystem:CC2}
    \left\lbrace
        \begin{array}{@{}lll@{}}
            \gcd(p,q) = 1, &
            1\leq p\leq q,&U<q, \\
            \inv_p(q)\leq p/2,\\
            2 \leq n q + kp \leq U^2, &
            1\leq k<n.
        \end{array}
    \right.
\end{gather}
In this case the inequalities $ n\leq{U^2}/{q}<U$ hold as well.

Let $\mathcal{C} \coloneqq \set{(p,q)\in\mathbb{N}^2}[\gcd(p,q)=1]$ and fix $k$ and $n$.
If $n+k\leq U$, then the domain of solutions of the above system  can be expressed as the lattice\footnote{The interested reader can have a look at the figures in \cite[pp.~1200]{zhabitskaya2009average-length} for a visual representation of those regions. 
The domain is the same but we restrict to its intersections with modular hyperbolas.}
\begin{align*}
    S_1(n,k)=\set*{(p,q)\in\mathcal{C}}[1\leq p\leq\frac{U^2}{n+k},\, U<q\leq\frac{U^2-kp}{n},\,\inv_p(q)\leq\frac{p}{2}]
\end{align*}
without the points of the lattice
\begin{align*}
    S_2(n,k)=\set*{(p,q)\in\mathcal{C}}[U<p\leq\frac{U^2}{n+k},\, U<q\leq p,\,\inv_p(q)\leq\frac{p}{2}].
\end{align*}
The number of integer points in $S_1(n,k)$ is equal to
\begin{equation*}
    \Sigma_1(n,k) \coloneqq \sum_{p\leq U^2/(n+k)}
    A_p\parentheses*{U,\frac{U^2-kp}{n}},
\end{equation*}
where $A_p(y,x)$ is defined in \cref{ModularHyperbola}.
Therefore, it follows that
\begin{align}\label{eq:Sigma1Case2}
    \begin{split}
        \Sigma_1(n,k)
        &=\sum_{p\leq U^2/(n+k)}\frac{\varphi(p)}{2p}\parentheses*{\frac{U^2}{n}-U-p\frac{k}{n}} +{} \\ & \qquad
        + \sum_{p\leq U^2/(n+k)}O_\epsilon\parentheses*{\frac{U^2-kp-nU+np}{np^{1/2-\epsilon}}}\\
        &=:S_{11}+S_{12}.
    \end{split}
\end{align}
Regarding the first sum,  \cref{lem:Euler'sPhiAsymptotics}~\cref{eq:Euler'sPhiAsymptotics1}--\cref{eq:Euler'sPhiAsymptotics2} and inequalities $k<n<U$ yield that
\[
    \begin{split}
        S_{11}
        &=\parentheses*{\frac{U^2}{n}-U}\parentheses*{\frac{U^2}{2\zeta(2)(n+k)}+O\parentheses*{\log\frac{U^2}{n+k}}}+{}\\
        &\quad-\frac{k}{n}\parentheses*{\frac{U^4}{4\zeta(2)(n+k)^2}+O\parentheses*{\frac{U^2}{n+k}\log\frac{U^2}{n+k}}}\\
        &=\frac{U^4}{2\zeta(2)n(n+k)}-\frac{U^3}{2\zeta(2)(n+k)}-\frac{kU^4}{4\zeta(2)n(n+k)^2}+O\parentheses*{\frac{U^2}{n}\log\frac{U^2}{n+k}}\\
        &=\frac{U^4}{4\zeta(2)n(n+k)}+\frac{U^4}{4\zeta(2)(n+k)^2}-\frac{U^3}{2\zeta(2)(n+k)}+O\parentheses*{\frac{U^2}{n}\log\frac{U^2}{n+k}}.
    \end{split}
\]
For the sum $S_{12}$ over the error terms, we estimate
\[
    S_{12}
    \ll_\epsilon\frac{U^2-nU}{n}\parentheses*{\frac{U^2}{n+k}}^{1/2+\epsilon}+\frac{n-k}{n}\parentheses*{\frac{U^2}{n+k}}^{3/2+\epsilon}\ll_\epsilon\frac{U^{3+2\epsilon}}{n(n+k)^{1/2+\epsilon}}.
\]
We work similarly for the number of integer points in $S_2(n,k)$:
\begin{align*}
    \Sigma_2(n,k) &
    =\sum_{U<p\leq U^2/(n+k)}A_p\parentheses*{U,p} \\ &
    =\sum_{U<p\leq U^2/(n+k)}\parentheses*{\frac{\varphi(p)}{2p}(p-U)+O_\epsilon\parentheses*{\frac{2p+U}{p^{1/2-\epsilon}}}}.
\end{align*}
Once more, \cref{lem:Euler'sPhiAsymptotics}~\cref{eq:Euler'sPhiAsymptotics1}--\cref{eq:Euler'sPhiAsymptotics2} and inequalities $k<n<U$ yield that
\begin{align*}
	\MoveEqLeft
	\sum_{U<p\leq U^2/(n+k)}\frac{\varphi(p)}{2p}(p-U) \\
	&=\frac{1}{4\zeta(2)}\parentheses*{\frac{U^4}{(n+k)^2}-U^2}+O\parentheses*{\frac{U^2}{n+k}\log\frac{U^2}{n+k}}+{}\\
	&\quad-\frac{U}{2\zeta(2)}\parentheses*{\frac{U^2}{n+k}- U+O\parentheses*{\log\frac{U^2}{n+k}}}\\
	&=\frac{U^4}{4\zeta(2)(n+k)^2}-\frac{U^3}{2\zeta(2)(n+k)}+O\parentheses*{U^2+\frac{U^2}{n}\log\frac{U^2}{n+k}},
\end{align*}
while for the sum of the error terms we obtain that
\begin{equation}\label{eq:ErrorS2Case2}
    \sum_{U<p\leq U^2/(n+k)}O_\epsilon\parentheses*{\frac{2p+U}{p^{1/2-\epsilon}}}
    \ll_\epsilon\sum_{U<p\leq U^2/(n+k)}p^{1/2+\epsilon}
    \ll_\epsilon\frac{U^{3+2\epsilon}}{(n+k)^{3/2+\epsilon}}.
\end{equation}
In view of \cref{eq:Sigma1Case2}--\cref{eq:ErrorS2Case2} and \cref{lem:AsymptoticFormulae}~\cref{eq:Asymptotic Formulae1}, we conclude that the number of solutions of the system \cref{eq:TheSystem:CC2} for pairs $(n,k)\in\mathbb{N}^2$ such that $1\leq k<n$ and $n+k\leq U$, is equal to
\begin{align*}
	\MoveEqLeft
    \mathop{\sum_{n<U}\sum_{k<n}}_{n+k\leq U}\parentheses*{\Sigma_1(n,k)-\Sigma_2(n,k)} \\
    &=\frac{U^4}{4\zeta(2)}\mathop{\sum_{n<U}\sum_{k<n}}_{n+k\leq U}\frac{1}{n(n+k)} + \mathop{\sum_{n<U}\sum_{k<n}}_{n+k\leq U}\brackets*{O\parentheses*{U^2}+O_\epsilon\parentheses*{\frac{U^{3+2\epsilon}}{nk^{1/2+\epsilon}}}}\\
    &=\frac{\log 2}{4\zeta(2)}U^4\log U+O\parentheses*{U^4}+O_\epsilon\parentheses*{U^{7/2+2\epsilon}}.
\end{align*}

Now we consider the pairs $(n,k)\in\mathbb{N}^2$ for which $1\leq k<n$ and $n+k>U$. 
In that case the number of solutions of the system \cref{eq:TheSystem:CC2} is smaller than the number of solutions of the same system without the restrictions on coprimality and modular inversion. 
This number has been computed in \cite[(54)--(56)]{zhabitskaya2009average-length} to be $O\parentheses*{U^4}$.
Therefore, by fixing $\epsilon\in(0,1/4)$, we obtain that
\begin{align*}
    R_2(U)=\frac{\log 2}{4\zeta(2)}U^4\log U+O\parentheses*{U^4}.
\end{align*}

\subsection*{Case~3}\label{subsec:case3}
We count the number of solutions $R_3(U)$ of
\[
    \left\lbrace
        \begin{array}{@{}ll@{}}
        \gcd(p,q) = 1, &
        1\leq q<p\leq U, \\
        \inv_p(q)\leq p/2,\\
        2 \leq n q + kp \leq U^2, &
        1\leq k<n.
        \end{array}
    \right.
\]
Similar as in Case~1 (see also \cite[(58)--(60)]{zhabitskaya2009average-length}), the number of solutions of the above system is equal to
\begin{align}\label{eq:TheSolutionsofCC3}
    \frac{U^4}{2}\mathop{\sum_{p\leq U}\sum_{ q<p}}_{\substack{\gcd(p,q)=1\\\inv_{p}(q)\leq p/2}}\frac{1}{q(p+q)}+O\parentheses*{U^3\log U}.
\end{align}
It remains to compute the double sum
\begin{align}\label{S1S2S3}
    \begin{split}
        \mathop{\sum_{p\leq U}\sum_{ q<p}}_{\substack{\gcd(p,q)=1\\\inv_{p}(q)\leq p/2}}\frac{1}{q(p+q)}
        &=\mathop{\sum_{p\leq U}\sum_{ q<p}}_{\substack{\gcd(p,q)=1\\\inv_{p}(q)\leq p/2}}\frac{1}{pq}-\mathop{\sum_{p\leq U}\sum_{q<p}}_{\substack{\gcd(p,q)=1\\\inv_{p}(q)\leq p/2}}\frac{1}{p(q+p)}\\
        &=\mathop{\sum_{p\leq U}\sum_{ p^{1/2}\leq q<p}}_{\substack{\gcd(p,q)=1\\\inv_{p}(q)\leq p/2}}\frac{1}{pq}+\mathop{\sum_{p\leq U}\sum_{q<p^{1/2}}}_{\substack{\gcd(p,q)=1\\\inv_{p}(q)\leq p/2}}\frac{1}{pq}-\mathop{\sum_{p\leq U}\sum_{q<p}}_{\substack{\gcd(p,q)=1\\\inv_{p}(q)\leq p/2}}\frac{1}{p(q+p)}\\
        &=:S_1+S_2-S_3.
    \end{split}
\end{align}

In view of \cref{lem:inversiontrick} and our remark \cref{eq:RemarkforCase3}, we have that
\begin{align}\label{S3}
    S_3
    =\mathop{\sum_{p\leq U}\sum_{q<p}}_{\substack{\gcd(p,q)=1\\\inv_{p}(q)\leq p/2}}\frac{1}{p(q+p)}
    =\mathop{\sum_{q< U}\sum_{q< p\leq U}}_{\substack{\gcd(p,q)=1\\\inv_{q}(p)>q/2 }}\frac{1}{p(q+p)}
    =\frac{\log 2}{2\zeta(2)}\log U+O(1).
\end{align}

Interchanging the sums in $S_1$ and applying \cref{lem:inversiontrick} yield that
\begin{align*}
    S_1=\sum_{q< U}\frac{1}{q}\mathop{\sum_{ q<p\leq  V_q}}_{\substack{\gcd(p,q)=1\\\inv_{q}(p)> q/2}}\frac{1}{p},
\end{align*}
where $V_q \coloneqq \min\set*{U,q^{2}}$.
If we set
\[
    f(x)=\frac{1}{x},\quad g(x)=\frac{\varphi(q)}{2q}(x-q)\quad\text{and}\quad M(x)=\frac{x}{q^{1/2-\epsilon}},
\]
then it follows from \cref{ModularHyperbola} and \cref{eq:Abel'sSummation} that 
\[
    \mathop{\sum_{q<p\leq  V_q}}_{\substack{\gcd(p,q)=1\\\inv_{q}(p)> q/2}}\frac{1}{p}
	\begin{multlined}[t]
		= \frac{\varphi(q)}{2q}\int_q^{V_q}\frac{\dd{x}}{x} + O_\epsilon\parentheses*{q^{-1/2+\epsilon}+\int_q^{V_q}\frac{\dd{x}}{xq^{1/2-\epsilon}}}\\
		= \frac{\varphi(q)}{2q}\log\frac{V_q}{q} + O_{\epsilon}\parentheses*{q^{-1/2+2\epsilon}}. \hfill\phantom.
	\end{multlined}
\]
Hence,
\[
	S_1 = \sum_{q<U^{1/2}}\frac{\varphi(q)}{2q^2}\log q+\sum_{U^{1/2}\leq q<U}\frac{\varphi(q)}{2q^2}\parentheses*{\log{U}-\log{q}}+\sum_{q<U}O_{\epsilon}\parentheses*{q^{-3/2+2\epsilon}}.
\]
We now take $\epsilon=1/5$, so that the last sum on the right hand side converges if $U$ is replaced by $\infty$ (any $\epsilon<1/4$ would do).
Therefore, in view of \cref{lem:Euler'sPhiAsymptotics}~\cref{eq:Euler'sPhiAsymptotics3}--\cref{eq:Euler'sPhiAsymptotics4}, we obtain that
\begin{align}\label{S1final}
    \begin{split}
        S_1
        &=\frac{\parentheses*{\log U}^2}{16\zeta(2)}+\frac{\parentheses*{\log U}^2}{4\zeta(2)}+O\parentheses*{\frac{\parentheses*{\log U}^2}{U}}-\frac{3\parentheses*{\log U}^2}{16\zeta(2)}+O(1)\\
        &=\frac{\parentheses*{\log U}^2}{8\zeta(2)}+O(1).
    \end{split}
\end{align}

Lastly, we proceed with the computation of $S_2$ where the bias in the \EAbyexcess{} makes its appearance for the first time.
Interchanging the sums in $S_2$ and applying \cref{lem:inversiontrick} yield that
\begin{equation}\label{S2}
	\begin{aligned}
	    S_2 &=
	    \mathop{\sum_{p\leq U}\sum_{q<p^{1/2}}}_{\substack{\gcd(p,q)=1\\\inv_{p}(q)\leq p/2}}\frac{1}{pq}
	    =\sum_{q< U^{1/2}}\frac{1}{q}\mathop{\sum_{q^{2}<p\leq U}}_{\substack{\gcd(p,q)=1\\\inv_{q}(p)> q/2}}\frac{1}{p} \\ &
	    =\sum_{q< U^{1/2}}\frac{1}{q}\mathop{\sum_{q/2<b\leq q}}_{\gcd(b,q)=1}\mathop{\sum_{ q^{2}<p\leq U}}_{p\equiv \inv_{q}(b)\bmod q} \!\!\! \frac{1}{p}.
	\end{aligned}
\end{equation}
Since
\[
	\#\set{ p\leq x }[ p\equiv \inv_{q}(b)\bmod q ] = \frac{x}{q} + \LandauO(1),
\]
for any coprime integers $1\leq b\leq q$, we know from \cref{eq:Abel'sSummation} that
\begin{align*}
    \mathop{\sum_{q^2<p\leq U}}_{p\equiv\inv_{q}(b)\bmod q}\frac{1}{p}=\frac{1}{q}\log \frac{U}{q^{2}}+O\parentheses*{q^{-2}}.
\end{align*}
Inserting this to \cref{S2} yields that
\begin{equation}\label{S21}
	\begin{aligned}
		S_{2}
		&=\sum_{q< U^{1/2}}\frac{1}{q}\mathop{\sum_{q/2<b\leq q}}_{\gcd(b,q)=1}\brackets*{\frac{1}{q}\log \frac{U}{q^2}+O\parentheses*{q^{-2}}} \\
		&=\sum_{q< U^{1/2}}\brackets*{\frac{\delta^+(q)}{q^2}\log \frac{U}{q^{2}}+O\parentheses*{\frac{\delta^+(q)}{q^3}}},
	\end{aligned}
\end{equation}
where $\delta^+(q)$ is defined in \cref{lem:Euler'sPhiInHalfIntervals}.

It is clear from relation \cref{S21} and \cref{lem:Euler'sPhiInHalfIntervals} where the bias occurs.
In the case we are considering (for fractions less than $1/2$), the terms which correspond to $q=1$ and $q=2$ come with weight $1$ and $0$, while in the complementary case (for fractions greater than $1/2$) where the counting function $\delta^+$ is replaced by $\delta^-$, they come with weight $0$ and $1/2$, respectively.	

Now in view of \cref{lem:Euler'sPhiInHalfIntervals}, \cref{lem:Euler'sPhiAsymptotics}~\cref{eq:Euler'sPhiAsymptotics3}--\cref{eq:Euler'sPhiAsymptotics4} we have that
\begin{align}\label{S2semi}
    \begin{split}
        S_2
        &=\log U+\sum_{ 3\leq q< U^{1/2}}{\frac{\varphi(q)}{2q^2}\log \frac{U}{q^{2}}}+O(1)\\
        &={\frac{1}{2}\log U-\frac{1}{8}\log U}+\sum_{ q< U^{1/2}}\frac{\varphi(q)}{2q^2}\parentheses*{\log {U}-2\log{q}}+O(1)\\
        &=\frac{3}{8}\log U+\frac{\parentheses*{\log U}^2}{8\zeta(2)}+\frac{\log U}{2\zeta(2)}\parentheses*{\gamma-\frac{\zeta'(2)}{\zeta(2)}}+O(1).
    \end{split}
\end{align}

Finally, we deduce from \cref{eq:TheSolutionsofCC3}, \cref{S1S2S3}, \cref{S3}, \cref{S1final} and \cref{S2semi} that 
\[
    R_3(U)
    =\frac{U^4\parentheses*{\log U}^2}{8\zeta(2)}+\frac{U^4\log U}{4\zeta(2)}\parentheses*{{\gamma}-\frac{\zeta'(2)}{\zeta(2)}+\frac{3\zeta(2)}{4}-\log 2}+O\parentheses*{U^4}.
\]

\subsection*{Case 4}
We count the number of solutions $R_4(U)$ of
\begin{gather}\label{eq:TheSystem:CC4}
    \left\lbrace
        \begin{array}{@{}lll@{}}
            \gcd(p,q) = 1, &
            1\leq q<p,& U<p, \\
            \inv_p(q)\leq p/2,\\
            2 \leq n q + kp \leq U^2, &
            1\leq k<n\leq U.
        \end{array}
    \right.
\end{gather}
Similar as in Case 2, we fix $k$ and $n$ and count the number of the above system, when $n+k\leq U$ and when $n+k> U$. 

If $n+k\leq U$, then the domain of solutions of \cref{eq:TheSystem:CC4} can be expressed as the union of the lattices\footnote{See \cite[pp.~1206]{zhabitskaya2009average-length} for figures.}
\begin{equation*}
    S_1(n,k)
    =\set*{(p,q)\in\mathcal{C}}[U< p\leq\frac{U^2}{n+k},\,1\leq q\leq p,\,\inv_p(q)\leq\frac{p}{2}]
\end{equation*}
and
\begin{align*}
    S_2(n,k)
    &=\set*{(p,q)\in\mathcal{C}}[\frac{U^2}{n+k}<p\leq\frac{U^2}{k},\,1\leq q\leq\frac{U^2-kp}{n},\,\inv_p(q)\leq\frac{p}{2}]\\
    &=\set*{(p,q)\in\mathcal{C}}[1\leq q\leq\frac{U^2}{n+k}-\theta,\, \frac{U^2}{n+k}<p\leq\frac{U^2-nq}{k},\,\inv_q(p)>\frac{q}{2}],
\end{align*}
where we have employed above \cref{lem:inversiontrick} and have introduced a parameter $\theta\in[0,1]$ which may vary.
The number of integer points in $S_1(n,k)$ is equal to
\begin{align*}
    \Sigma_1(n,k)
     \coloneqq \sum_{U<p\leq U^2/(n+k)}\mathop{\sum_{b\leq p/2}}_{\gcd(b,p)=1}\mathop{\sum_{ q\leq p}}_{q\equiv \inv_{p}(b)\bmod p}1
    =\sum_{U<p\leq U^2/(n+k)}\frac{\varphi(p)}{2}.
\end{align*}
It follows now from \cref{lem:Euler'sPhiAsymptotics}~\cref{eq:Euler'sPhiAsymptotics1} that
\begin{align}\label{Sigma1Case4}
    \begin{split}
        \Sigma_1(n,k)
        &=\frac{1}{4\zeta(2)}\parentheses[\bigg]{\parentheses*{\frac{U^2}{n+k}}^2-U^2}+O\parentheses*{\frac{U^2}{n+k}\log\frac{U^2}{n+k}}\\
        &=\frac{U^4}{4\zeta(2)(n+k)^2}+O\parentheses*{U^2+\frac{U^2}{n+k}\log\frac{U^2}{n+k}}.
    \end{split}
\end{align}
The number of integer points in $S_2(n,k)$ is equal to
\begin{align*}
    \Sigma_2(n,k)
     \coloneqq \sum_{q\leq{U^2}/(n+k)-\theta}
    B_q\parentheses*{\frac{U^2}{n+k},\frac{U^2-nq}{k}},
\end{align*}
where $B_q(y,x)$ is defined in \cref{ModularHyperbola}.
Upon applying said lemma, we infer that
\[
	\Sigma_{2}(n,k) = S_{21}+O_\epsilon(S_{22}),
\]
where
\begin{gather*}
	S_{21} = \sum_{q\leq{U^2}/(n+k)-\theta} \frac{\varphi(q)}{2q}\parentheses*{\frac{nU^2}{k(n+k)}-\frac{nq}{k}}, \\
	S_{22} = \sum_{q\leq{U^2}/(n+k)-\theta} \parentheses*{\frac{nU^2}{k(n+k)}-\frac{nq}{k}+q}q^{-1/2+\epsilon}.
\end{gather*}
From \cref{lem:Euler'sPhiAsymptotics}~\cref{eq:Euler'sPhiAsymptotics1}--\cref{eq:Euler'sPhiAsymptotics2} and inequalities $k<n<n+k\leq U$ we obtain that
\begin{align*}
	S_{21}
	&=\frac{nU^2}{2k(n+k)\zeta(2)}\parentheses*{\frac{U^2}{n+k}-\theta+O\parentheses*{\log\frac{U^2}{n+k}}}+{}\\
	&\quad-\frac{n}{4k\zeta(2)}\parentheses[\bigg]{\parentheses*{\frac{U^2}{n+k}-\theta}^2+O\parentheses*{\frac{U^2}{n+k}\log\frac{U^2}{n+k}}}\\
	&=\frac{nU^4}{4\zeta(2)k(n+k)^2}+O\parentheses*{\frac{nU^2}{k(n+k)}\log\frac{U^2}{n+k}}.
\end{align*}
For the sum over the error terms we estimate
\begin{align}\label{S22Case4}
    S_{22}
    \ll_\epsilon\frac{nU^2}{k(n+k)}\parentheses*{\frac{U^2}{n+k}}^{1/2+\epsilon}+\frac{n-k}{k}\parentheses*{\frac{U^2}{n+k}}^{3/2+\epsilon}
    \ll_\epsilon\frac{nU^{3+2\epsilon}}{k(n+k)^{3/2+\epsilon}}.
\end{align}
In view of \cref{Sigma1Case4}--\cref{S22Case4} and \cref{lem:AsymptoticFormulae}~\cref{eq:Asymptotic Formulae2}, we deduce that the number of solutions of the system \cref{eq:TheSystem:CC4} for pairs $(n,k)\in\mathbb{N}^2$ such that $1\leq k<n$ and $n+k\leq U$, is equal to
\begin{align*}
	\MoveEqLeft
    \mathop{\sum_{n<U}\sum_{k<n}}_{n+k\leq U} \parentheses*{\Sigma_1(n,k)+\Sigma_2(n,k)} \\
    &=\frac{U^4}{4\zeta(2)}\mathop{\sum_{n<U}\sum_{k<n}}_{n+k\leq U}\frac{1}{k(n+k)}+\mathop{\sum_{n<U}\sum_{k<n}}_{n+k\leq U}\brackets*{O\parentheses*{U^2}+O_\epsilon\parentheses*{\frac{U^{3+2\epsilon}}{kn^{1/2+\epsilon}}}}\\
    &=\frac{U^4\parentheses*{\log U}^2}{8\zeta(2)}+\frac{U^4\log U(\gamma-\log 2)}{4\zeta(2)}+O\parentheses*{U^4}+O_\epsilon\parentheses*{U^{7/2+2\epsilon}}.
\end{align*}

Now we consider the pairs $(n,k)\in\mathbb{N}^2$ for which $1\leq k<n$ and $n+k>U$. 
In that case the number of solutions of the system \cref{eq:TheSystem:CC4} is smaller than the number of solutions of the same system without the restrictions on coprimality and modular inversion. 
This number has been computed in \cite[(64)--(65)]{zhabitskaya2009average-length} to be $O\parentheses*{U^4}$.
Therefore, by fixing $\epsilon\in(0,1/4)$, we see that 
\[
    R_4(U)
    =\frac{U^4\parentheses*{\log U}^2}{8\zeta(2)}+\frac{U^4\log U}{4\zeta(2)}(\gamma-\log 2)+O\parentheses*{U^4}.
\]
\subsection*{Case 5}
We now count the number of solutions $R_5(U)$.
Employing \cref{lem:inversiontrick}, we find that this is the same as counting the number of solutions of the system
\begin{gather}\label{eq:TheSystem:CC52}
    \left\lbrace
        \begin{array}{@{}lll@{}}
            \gcd(p,q) = 1, &
            1\leq q<p,& U<p, \\
            \inv_q(p)> q/2,\\
            2 \leq n q + kp \leq U^2, &
            1\leq k<n,&U<n.
        \end{array}
    \right.
\end{gather}
Notice that the set of solutions of the above system is non-empty if, and only if, $k+q<U$.

For fixed $k$ and $q$ the number of solutions of \cref{eq:TheSystem:CC52} with respect to the various $n$ and $p$ is equal to
\begin{align*}
    \Sigma(k,q)
    &=\sum_{U<n\leq\parentheses*{U^2-k\lceil U\rceil}/q}\mathop{\sum_{q/2<b\leq q}}_{\gcd(b,q)=1}\mathop{\sum_{ U<p\leq \parentheses*{U^2-nq}/k}}_{p\equiv \inv_{q}(b)\bmod q}1\\
    &=\sum_{U<n\leq\parentheses*{U^2-k\lceil U\rceil}/q}\mathop{\sum_{q/2<b\leq q}}_{\gcd(b,q)=1}\parentheses*{\frac{1}{q}\parentheses*{\frac{U^2-nq}{k}-U}+O(1)}\\
    &=\sum_{U<n\leq\parentheses*{U^2-k\lceil U\rceil}/q}\parentheses*{\frac{\delta^+(q)}{q}\parentheses*{\frac{U^2-nq}{k}-U}+O\parentheses*{\frac{\delta^+(q)}{q}}},
\end{align*}
where $\ceil x \coloneqq \floor x+1$ is the ceiling function.
From \cref{lem:Euler'sPhiInHalfIntervals} and $k<U$ we deduce that
\begin{align*}
    \Sigma(k,1)
    &=\sum_{U<n\leq U^2-k\lceil U\rceil}
    \parentheses*{\frac{U^2}{k}-U-\frac{n}{k}}+O\parentheses*{U^2}\\
    &=\parentheses*{\frac{U^2}{k}-U}\parentheses*{U^2-k\ceil U-\floor U}+{}\\
    &\quad-\frac{\parentheses*{U^2-k\ceil U}^2+U^2-k\ceil U-{\ceil U}\floor U}{2k}+O\parentheses*{U^2}\\
    &=\frac{U^4}{2k}+O\parentheses*{U^3}
\end{align*}
and
$\Sigma(k,2)=0$.
Here is another case where the bias in the Euclidean algorithm appears.
Lastly, if $q\geq3$, then
\begin{align*}
    \Sigma(k,q)
    &=\sum_{U<n\leq \parentheses*{U^2-k\lceil U\rceil}/q}
    \frac{\varphi(q)}{2q}\parentheses*{\frac{U^2}{k}-U-\frac{nq}{k}}+O\parentheses*{U^2}\\
    &=\frac{\varphi(q)}{2q}\parentheses*{\frac{U^2}{k}-U}\parentheses*{\frac{U^2-kU+O(k)}{q}-U+O(1)}+O\parentheses*{U^2}+{}\\
    &\quad-\frac{\varphi(q)}{4k}\parentheses[\bigg]{\parentheses*{\frac{U^2-k U+O(k)}{q}+O(1)}^2- (U+O(1))^2}
\end{align*}
and by expanding each of the products we obtain that
\begin{align*}
    \Sigma(k,q)
    &=\frac{\varphi(q)}{2q}\parentheses*{\frac{U^4}{kq}-\frac{2U^3}{q}-\frac{U^3}{k}+\frac{kU^2}{q}+O\parentheses*{U^2}}+O\parentheses*{U^2}+{}\\
    &\quad-\frac{\varphi(q)}{4k}\parentheses*{\frac{U^4-2kU^3+k^2U^2}{q^2}+O\parentheses*{\frac{kU^2}{q^2}+\frac{U^2}{q}}-U^2+O(U)}\\
    &=\frac{\varphi(q)U^4}{4kq^2}-\frac{\varphi(q)U^3}{2q^2}-\frac{\varphi(q)U^3}{2qk}+\frac{\varphi(q)kU^2}{4q^2}+\frac{\varphi(q)U^2}{4k}+O\parentheses*{U^2}.
\end{align*}
Now we sum up over all pairs $(k,q)\in\mathbb{N}^2$ such that $k+q<U$, which is essentially equal to $R_5(U)$:
\begin{align*}
    \begin{split}
        \mathop{\sum_k\sum_q}_{k+q<U}\Sigma(k,q)
        &=\frac{U^4}{4}\parentheses[\bigg]{\mathop{\sum_k\sum_q}_{k+q<U}\frac{\varphi(q)}{kq^2}+\sum_{k\leq U-1}\frac{1}{k}-\sum_{k\leq U-2}\frac{1}{4k}}+O\parentheses*{U^4}+{}\\
        &\quad-\frac{U^3}{2}\mathop{\sum_k\sum_q}_{k+q<U}\parentheses*{\frac{\varphi(q)}{q^2}+\frac{\varphi(q)}{qk}}+\frac{U^2}{4}\mathop{\sum_k\sum_q}_{k+q<U}\parentheses*{\frac{\varphi(q)k}{q^2}+\frac{\varphi(q)}{k}}.
    \end{split}
\end{align*}
Each of the above sums is already given in \cref{lem:SomeMoreAsymptoticFormulae}, except of the harmonic sums
\[
    \sum_{k\leq U-1}\frac{1}{k}-\sum_{k\leq U-2}\frac{1}{4k}=\frac{3}{4}\log U+O\parentheses*{1}
\]
which have occurred here, because the quantities $\Sigma(k,1)$ and $\Sigma(k,2)$ are \emph{not} of the form
\[
    \frac{U^2\varphi(q)}{4kq^2}+O\parentheses*{U^3},\quad q=1,2,
\]
respectively.
Thus, we conclude that
\[
    R_5(U)
    =\frac{U^4\parentheses*{\log U}^2}{4\zeta(2)}+\frac{U^2\log U}{4\zeta(2)}\parentheses*{2\gamma-\frac{\zeta'(2)}{\zeta(2)}-3+\frac{3\zeta(2)}{4}}+O\parentheses*{U^4}.
\]


\subsection*{Acknowledgements}

It is the authors' pleasure to thank the anonymous referee for spotting some inaccuracies in an earlier draft and providing helpful suggestions.
During the preparation of this manuscript, the first-named author has been an associated student in the doctoral school programme `discrete mathematics' at Graz University of Technology.

\section{Statements and Declarations}

On behalf of all authors, the corresponding author states that there is no conflict of interest.
This version of the article has been accepted for publication in \emph{Mathematische Annalen}, after peer review, but is not the Version of Record and does not reflect post-acceptance improvements, or any corrections.
The \emph{Version of Record} is available online at:
{\small\url{https://dx.doi.org/10.1007/s00208-022-02452-2}}

\appendix

\section{Some asymptotic formulae}
\label{sec:AsymptoticFormulae}

We start by recalling, for the reader's convenience, a special case of a classical result on the distribution of points on modular hyperbolas.

\begin{lem}[Points on the modular hyperbola]\label{ModularHyperbola}
	Let $p$ be a positive integer and $x > y \geq 0$.
	Let
    \[
         A_p(y,x) \coloneqq \sum_{\substack{ y < q \leq x \\ \gcd(p,q)=1 \\ \inv_{p}(q) \leq p/2 }} 1
	     \quad\text{and}\quad
	     B_p(y,x) \coloneqq \sum_{\substack{ y < q \leq x \\ \gcd(p,q)=1 \\ \inv_{p}(q)   >  p/2 }} 1.
    \]
    Then, for any $\epsilon>0$,
    \begin{align*}
		A_p(y,x)
		= \dfrac{\varphi(p)}{2p}(x-y)
		+ \LandauO_\epsilon\parentheses*{\frac{x-y+p}{p^{1/2-\epsilon}}}
		= B_p(y,x).
    \end{align*}
\end{lem}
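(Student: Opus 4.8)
The plan is first to reduce the statement to the estimate for $A_p(y,x)$ alone. Indeed, $A_p(y,x)+B_p(y,x)$ is simply the number of integers in $(y,x]$ coprime to $p$, which by Möbius inversion equals $\frac{\varphi(p)}{p}(x-y)+\LandauO(d(p))$, where $d$ is the divisor function; since $d(p)\ll_\epsilon p^{\epsilon}\ll(x-y+p)p^{-1/2+\epsilon}$, the formula for $B_p$ follows from that for $A_p$ by subtraction. Moreover one may assume $p\ge 3$, because for $p\in\{1,2\}$ the claimed error term $(x-y+p)p^{-1/2+\epsilon}$ already dominates both $A_p(y,x)$ and $\tfrac{\varphi(p)}{2p}(x-y)$ outright.

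Next I would unfold the condition $\inv_p(q)\le p/2$. Writing $b=\inv_p(q)$---which then ranges over $\{1,\dots,\lfloor p/2\rfloor\}$, is automatically coprime to $p$, and satisfies $q\equiv\inv_p(b)\bmod p$---one obtains
\[
	A_p(y,x)=\sum_{\substack{1\le b\le p/2\\\gcd(b,p)=1}}\#\{\,q\in(y,x]:q\equiv\inv_p(b)\bmod p\,\}.
\]
I would then detect the congruence by additive characters, $\indic[q\equiv c\bmod p]=\tfrac1p\sum_{h\bmod p}\e{h(q-c)/p}$, and interchange the summations. The contribution of $h\equiv 0\bmod p$ is $\tfrac1p\,\delta^-(p)\,(\lfloor x\rfloor-\lfloor y\rfloor)$, which by \cref{lem:Euler'sPhiInHalfIntervals} equals $\tfrac{\varphi(p)}{2p}(x-y)+\LandauO(1)$; this produces the main term.

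The real work is in the frequencies $h\not\equiv 0\bmod p$. There the inner sum over $q$ is a geometric progression of modulus $\ll\min\bigl(x-y+1,\norm{h/p}^{-1}\bigr)$, multiplied by the incomplete Kloosterman sum $K_h\coloneqq\sum_{1\le b\le p/2,\ \gcd(b,p)=1}\e{-h\inv_p(b)/p}$. I would bound $K_h$ by completing it to a full sum: $K_h=\tfrac1p\sum_{t\bmod p}\widehat{\psi}(t)\,S(t,-h;p)$, where $\psi$ is the indicator of $(0,p/2]$ (so $\abs{\widehat{\psi}(t)}\ll\min(p,\norm{t/p}^{-1})$) and $S$ is the classical Kloosterman sum. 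The Weil--Estermann bound $\abs{S(t,-h;p)}\le d(p)\gcd(t,h,p)^{1/2}p^{1/2}$, used together with the trivial bound $\abs{K_h}\le\varphi(p)$ for the sparse set of $h$ sharing a large common factor with $p$, and with $\sum_{t\bmod p}\abs{\widehat{\psi}(t)}\ll p\log p$, then yields $\abs{K_h}\ll_\epsilon p^{1/2+\epsilon}$ outside a negligible exceptional set. Feeding this into $\tfrac1p\sum_{h\not\equiv 0}\min\bigl(x-y+1,\norm{h/p}^{-1}\bigr)\abs{K_h}$ and splitting the $h$-range at $\abs h\approx p/(x-y+1)$ bounds the off-diagonal term by $\ll_\epsilon (x-y+p)\,p^{-1/2+\epsilon}$ (in fact, even by the $x$-$y$-free quantity $p^{1/2+\epsilon}$), which completes the proof.

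I expect the main obstacle to be the full uniformity in $p$ in the Kloosterman step: for composite $p$, and in particular for $h$ not coprime to $p$, one must keep track of the $\gcd(t,h,p)$ factor in Weil--Estermann, the divisor term $d(p)$, the size of $\widehat{\psi}$, and the trivial bound all at once, and verify that the genuinely exceptional $h$ contribute only $\LandauO_\epsilon(p^{\epsilon})$; once this bookkeeping is set up the remaining estimates are routine. (An expedient alternative is simply to quote the relevant statement from the literature on the distribution of points on modular hyperbolas, of which this lemma is a very special case, so that no separate argument is needed.)
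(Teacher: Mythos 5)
Your argument is correct in outline, but it takes a genuinely different route from the paper. The paper's proof of \cref{ModularHyperbola} is essentially a citation: it invokes the folklore result on the distribution of points $(q,\inv_p(q))$ on modular hyperbolas (Shparlinski's survey, Theorem~13 in \textsection~3.1, or Boca--Cobeli--Zaharescu, Lemma~1.7), and its only original content is the observation that the length restriction $x-y\leq p$ in those sources can be removed by splitting $\lparen y,x\rbrack$ into $\ll 1+(x-y)/p$ subintervals of length at most $p$ --- which is exactly why the error term carries the summand $p\cdot p^{-1/2+\epsilon}$. What you have done is open up that black box and reprove the cited result: the unfolding $b=\inv_p(q)$, detection of the congruence $q\equiv\inv_p(b)\bmod p$ by additive characters, completion of the incomplete Kloosterman sum, and the Weil--Estermann bound is precisely the standard proof underlying both references. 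Your reduction of $B_p$ to $A_p$ via the coprime count, and your disposal of $p\leq 2$ by noting the error term dominates, are both fine (and needed, since $\delta^-(p)=\varphi(p)/2$ from \cref{lem:Euler'sPhiInHalfIntervals} requires $p\geq 3$). One small dividend of your direct approach is that the bound $\min\parentheses*{x-y+1,\norm{h/p}^{-1}}$ on the geometric sum handles long intervals uniformly, so no splitting step is needed; the price is the composite-modulus bookkeeping with $\gcd(t,h,p)$ that you correctly identify as the only delicate point --- it is routine but must be done, and it is exactly what the paper avoids by citing. Your closing remark that one could ``simply quote the relevant statement from the literature'' is, in fact, what the paper does.
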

\begin{proof}
	This is a consequence of a more general folklore result about the points $(q,\inv_{p}(q))$ on a modular hyperbola (mod~$p$) where both coordinates are restricted to intervals.
	The interested reader may consult the survey~\cite{Shparmodularhyperbolas} (in particular, see Theorem~13 in \textsection~3.1 therein).
	A version with a slightly more explicit error term can be found, for instance, in~\cite[Lemma~1.7]{boca2000distribution-lattice-pts}.
	Strictly speaking, in both of the above sources, the intervals in question are restricted to have length not exceeding $p$.
	Nevertheless, the version required here easily follows from that by splitting $\lparen y,x\rbrack$ into $\ll 1+(x-y)/p$ intervals of length at most $p$.
\end{proof}

The next result is a version of Abel's summation formula.

\begin{lem}[Abel's summation formula]\label{eq:Abel'sSummation}
    Let $f,g \colon \ropeninterval{0,\infty}\to\RR$ be continuously differentiable functions.
    Let $y\geq 0$ be arbitrary.
    Suppose that $(a_n)_{n\in\NN}$ is a sequence of complex numbers such that the approximation
    \[
        \sum_{y < n \leq x} a_n
        = g(x) + \LandauO(M(x)),
    \]
    holds with some continuous function $M\colon \ropeninterval{0,\infty} \to \ropeninterval{1,\infty}$.
    Then
    \begin{align*}
        \sum_{y<n\leq x}a_nf(n)
        = {\int_{y}^x f(t) g'(t) \dd{t}} + \LandauO\parentheses*{
        \max_{t=x,y}\abs{f(t)M(t)}+ {\int_y^x \abs{f'(t)} M(t) \dd{t}}}.
    \end{align*}
\end{lem}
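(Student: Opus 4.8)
The plan is to reduce the assertion to the standard partial-summation identity and then substitute the assumed approximation, keeping careful track of all boundary contributions. Write $A(t) \coloneqq \sum_{y < n \leq t} a_n$ and $R(t) \coloneqq A(t) - g(t)$, so that the hypothesis becomes $\abs{R(t)} \ll M(t)$ for all $t \geq y$. Applying this at $t = y$, where the empty sum gives $A(y) = 0$, yields the auxiliary bound $g(y) = -R(y) = \LandauO(M(y))$, which will be needed below.

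Next I would invoke Abel's summation formula in its Riemann--Stieltjes form --- legitimate since $f$ is continuously differentiable and $A$ is a step function of bounded variation on $[y, x]$ --- to obtain
\[
    \sum_{y < n \leq x} a_n f(n)
    = f(x) A(x) - \int_y^x A(t) f'(t) \dd{t},
\]
the boundary term at $y$ having vanished because $A(y) = 0$. Substituting $A = g + R$ and integrating by parts once more, via $\int_y^x g(t) f'(t) \dd{t} = f(x) g(x) - f(y) g(y) - \int_y^x f(t) g'(t) \dd{t}$, the two copies of $f(x) g(x)$ cancel and one is left with the identity
\[
    \sum_{y < n \leq x} a_n f(n)
    = \int_y^x f(t) g'(t) \dd{t} + f(x) R(x) + f(y) g(y) - \int_y^x R(t) f'(t) \dd{t}.
\]

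It remains to bound the three error terms. The term $f(x) R(x)$ is $\LandauO(\abs{f(x) M(x)})$ by $\abs{R(x)} \ll M(x)$; the term $f(y) g(y)$ is $\LandauO(\abs{f(y) M(y)})$ by the auxiliary bound on $g(y)$ noted above; and $\int_y^x R(t) f'(t) \dd{t}$ is $\LandauO\parentheses*{\int_y^x \abs{f'(t)} M(t) \dd{t}}$ since $\abs{R(t)} \ll M(t)$ throughout. Collecting these gives precisely the stated error term $\LandauO\parentheses*{\max_{t = x, y} \abs{f(t) M(t)} + \int_y^x \abs{f'(t)} M(t) \dd{t}}$. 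The argument is entirely routine; the one place that repays a little care is the spurious boundary term $f(y) g(y)$, which is not of the shape $f(y) R(y)$ and so must be handled by invoking the hypothesis at the left endpoint $x = y$ rather than by the pointwise bound on $R$.
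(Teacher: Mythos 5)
Your proof is correct; the paper states this lemma without proof, treating it as a routine variant of Abel summation, and your argument is precisely the standard one that is being taken for granted. The one genuinely delicate point --- that the boundary term $f(y)g(y)$ is not of the form $f(y)R(y)$ and must instead be bounded by evaluating the hypothesis at the left endpoint to get $g(y)=\LandauO(M(y))$ --- is exactly what justifies the $\max_{t=x,y}\abs{f(t)M(t)}$ in the stated error term, and you have handled it correctly.
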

We also require the following lemma, which is an application of Möbius inversion.
\begin{lem}\label{MobInverFormula}
    Let $\Psi(Q) = a Q^2 (\log Q)^2 + b Q^2 \log Q + \LandauO(Q^2)$.
    Then
    \[
        \sum_{d\leq Q} \mu(d) \Psi\parentheses*{\dfrac{Q}{d}}
        = \frac{a}{\zeta(2)} Q^2 (\log Q)^2 + \frac{1}{\zeta(2)} \parentheses*{b-2a\dfrac{\zeta'(2)}{\zeta(2)}} Q^2 \log Q + \LandauO(Q^2).
    \]
\end{lem}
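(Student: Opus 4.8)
The plan is to insert the hypothesised expansion of $\Psi$ into the Möbius sum, expand the logarithms via $\log(Q/d)=\log Q-\log d$, and reduce matters to three elementary Dirichlet-series facts.

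Concretely, every $d$ occurring in the sum satisfies $d\le Q$, hence $Q/d\ge1$ (so no sign issues with the logarithms arise), and applying the hypothesis with argument $Q/d$ gives
\[
	\sum_{d\le Q}\mu(d)\Psi\parentheses*{\tfrac{Q}{d}}
	= aQ^2\sum_{d\le Q}\frac{\mu(d)}{d^2}\parentheses*{\log\tfrac{Q}{d}}^{2}
	+ bQ^2\sum_{d\le Q}\frac{\mu(d)}{d^2}\log\tfrac{Q}{d}
	+ \LandauO\parentheses*{Q^2\sum_{d\le Q}\frac{1}{d^2}}.
\]
Here one uses that the implied constant in the hypothesis $\Psi(x)=ax^2(\log x)^2+bx^2\log x+\LandauO(x^2)$ is uniform for $x\ge1$, which in particular covers the bounded arguments $x=Q/d$ with $d$ close to $Q$; the last error term is $\LandauO(Q^2)$ because $\sum_{d\ge1}d^{-2}$ converges. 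Expanding $(\log(Q/d))^2=(\log Q)^2-2(\log Q)(\log d)+(\log d)^2$, the right-hand side becomes a combination of the partial sums $T_j(Q):=\sum_{d\le Q}\mu(d)(\log d)^j d^{-2}$ for $j=0,1,2$, weighted by powers of $\log Q$.

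Next I would evaluate the $T_j$. From $1/\zeta(s)=\sum_{d\ge1}\mu(d)d^{-s}$ (for $\Re s>1$) and term-by-term differentiation one obtains $\sum_{d\ge1}\mu(d)d^{-2}=1/\zeta(2)$ and $\sum_{d\ge1}\mu(d)(\log d)d^{-2}=\zeta'(2)/\zeta(2)^2$, while $\sum_{d\ge1}\mu(d)(\log d)^2 d^{-2}$ is some absolutely convergent constant. Truncating each series at $d=Q$ costs a tail bounded by $\sum_{d>Q}(\log d)^j d^{-2}\ll_j(\log Q)^j/Q$ (integral comparison), so that $T_0(Q)=1/\zeta(2)+\LandauO(1/Q)$, $T_1(Q)=\zeta'(2)/\zeta(2)^2+\LandauO((\log Q)/Q)$ and $T_2(Q)=\LandauO(1)$.

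Finally, substituting these estimates, every cross term of the shape $Q^2(\log Q)^i\cdot\LandauO((\log Q)^j/Q)$ with $i\le2$ is $\LandauO(Q(\log Q)^{i+j})=\LandauO(Q^2)$, and the $T_2$-contribution enters only as $Q^2\cdot\LandauO(1)=\LandauO(Q^2)$; hence only the $(\log Q)^2$- and $(\log Q)^1$-terms survive. Collecting them yields the coefficient $a/\zeta(2)$ for $Q^2(\log Q)^2$ and the coefficient $\frac{b}{\zeta(2)}-\frac{2a\zeta'(2)}{\zeta(2)^2}=\frac{1}{\zeta(2)}\parentheses*{b-2a\frac{\zeta'(2)}{\zeta(2)}}$ for $Q^2\log Q$, which is exactly the claimed formula. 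There is no real obstacle here; the only point requiring care is the bookkeeping of error terms — checking that the convergent tail and all the $\log$-power cross terms are genuinely absorbed into $\LandauO(Q^2)$, and noting that it is precisely the uniformity of the $\LandauO(x^2)$ in the hypothesis that licenses summing $\Psi(Q/d)$ over all $d\le Q$, including those for which $Q/d$ is bounded.
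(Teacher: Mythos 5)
Your proposal is correct. The expansion of $\log(Q/d)$ into powers of $\log d$, the identities $\sum_{d\ge1}\mu(d)d^{-2}=1/\zeta(2)$ and $\sum_{d\ge1}\mu(d)(\log d)d^{-2}=\zeta'(2)/\zeta(2)^2$, the tail bounds $\sum_{d>Q}(\log d)^j d^{-2}\ll_j(\log Q)^j/Q$, and the bookkeeping showing all cross terms are $\LandauO(Q^2)$ all check out, and you correctly identify the one point of care, namely that the $\LandauO(x^2)$ in the hypothesis must be uniform for $x\ge1$ (which is the paper's standing convention for the $\LandauO$ notation). The paper itself does not write out a proof but simply cites Corollary~3 of Zhabitskaya's article; your computation is the standard direct argument underlying that reference, so there is nothing to reconcile beyond noting that you have supplied in full what the paper delegates to a citation.
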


\begin{proof}
	For a proof see, e.g., \cite[Corollary~3]{zhabitskaya2009average-length}.
\end{proof}

We conclude with recording two technical lemmas which are used in the proof of \cref{prop:CountingSolutionsToSystem}.

\begin{lem}\label{lem:AsymptoticFormulae}
    The following asymptotic formulae hold for any $U\geq2$:
    \begin{enumerate}
        \item\label{eq:Asymptotic Formulae1}$\displaystyle
             \mathop{\sum_{n<U}\sum_{k<n}}_{n+k\leq U}\frac{1}{n(n+k)}=\log 2\log U+O(1)
        $,
        \item\label{eq:Asymptotic Formulae2}$\displaystyle
            \mathop{\sum_{n<U}\sum_{k<n}}_{n+k\leq U}\frac{1}{k(n+k)}=\frac{\parentheses*{\log U}^2}{2}+(\gamma-\log2)\log U+O(1)
        $.
    \end{enumerate}
\end{lem}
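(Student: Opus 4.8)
\emph{Plan of proof.} The guiding idea is to decouple the constraint $n+k\le U$ by the substitution $m=n+k$, which is a bijection between the index set $\set{(n,k)}[1\le k<n,\ n+k\le U]$ and the set $\set{(m,n)}[m\le U,\ m/2<n\le m-1]$ (one recovers $k=m-n$, and then $k\ge1$, $k<n$, and $n<U$ are all automatic). Since $n(n+k)=nm$ and $k(n+k)=(m-n)m$, both sums in the lemma become iterated sums with a clean inner range:
\begin{align*}
	\mathop{\sum_{n<U}\sum_{k<n}}_{n+k\le U}\frac{1}{n(n+k)} &= \sum_{m\le U}\frac{1}{m}\sum_{m/2<n\le m-1}\frac{1}{n}, \\
	\mathop{\sum_{n<U}\sum_{k<n}}_{n+k\le U}\frac{1}{k(n+k)} &= \sum_{m\le U}\frac{1}{m}\sum_{m/2<n\le m-1}\frac{1}{m-n}.
\end{align*}

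First I would evaluate the inner sums by means of the elementary estimate $\sum_{j\le N}1/j=\log N+\gamma+O(1/N)$. For the first sum the inner sum equals $H_{m-1}-H_{\lfloor m/2\rfloor}=\log 2+O(1/m)$; for the second, reindexing by $j=m-n$ (so that $j$ ranges over $1\le j\le\lceil m/2\rceil-1$) gives the inner sum $H_{\lceil m/2\rceil-1}=\log m-\log 2+\gamma+O(1/m)$. In both cases the implied constant is absolute (say for $m\ge3$; the one or two smaller values of $m$ contribute $O(1)$), so that $\sum_{m\le U}m^{-1}\cdot O(1/m)=O(1)$.

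It then remains to sum the main terms over $m\le U$. For part~(1) this produces $\log 2\sum_{m\le U}1/m+O(1)=\log 2\log U+O(1)$. For part~(2) one gets $\sum_{m\le U}(\log m)/m+(\gamma-\log 2)\sum_{m\le U}1/m+O(1)$; here $\sum_{m\le U}1/m=\log U+O(1)$, while $\sum_{m\le U}(\log m)/m=\frac12(\log U)^2+O(1)$ by comparison with $\int_1^U(\log t)/t\,\dd{t}=\frac12(\log U)^2$ (e.g.\ via \cref{eq:Abel'sSummation}, the error being $\ll\sup_{t\ge1}\abs{(\log t)/t}+\int_1^U\abs{(1-\log t)/t^2}\,\dd{t}=O(1)$). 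Collecting the contributions yields $\frac12(\log U)^2+(\gamma-\log 2)\log U+O(1)$, as claimed.

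The argument is essentially routine, and I do not expect a serious obstacle. The only points that call for a little care are: verifying that replacing $\lfloor m/2\rfloor$ and $\lceil m/2\rceil$ by $m/2$ inside the logarithms costs only $O(1/m)$, so that the half-integer rounding conventions are irrelevant to the asymptotics; and confirming that the accumulated $O(1/m)$-errors really do sum to a bounded quantity (which they do, since $\sum_m m^{-2}<\infty$), rather than contributing a spurious $\log\log U$. The standard estimate $\sum_{m\le U}(\log m)/m=\frac12(\log U)^2+O(1)$ is the one external ingredient, and it is classical.
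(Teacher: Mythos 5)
Your argument is correct. The substitution $m=n+k$ does set up the bijection you claim (given $m\le U$ and $m/2<n\le m-1$ one recovers $k=m-n\ge 1$, $k<n$, and $n\le m-1<U$), the inner sums are $H_{m-1}-H_{\lfloor m/2\rfloor}=\log 2+O(1/m)$ and $H_{\lceil m/2\rceil-1}=\log m-\log 2+\gamma+O(1/m)$ as stated, the $O(1/m)$ errors sum against $1/m$ to $O(1)$, and the outer summation with $\sum_{m\le U}(\log m)/m=\tfrac12(\log U)^2+O(1)$ gives exactly the claimed main terms. Note, however, that the paper does not prove this lemma at all: it simply cites Lemma~9 of Zhabitskaya's paper (with a remark that the restriction $U\notin\NN$ there is harmless). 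So your contribution is a genuinely self-contained elementary derivation where the paper outsources the work; your route via the decoupling $m=n+k$ is clean and makes the constants $\log 2$ and $\gamma-\log 2$ transparent (they arise directly from $H_{m-1}-H_{\lfloor m/2\rfloor}$ and $H_{\lceil m/2\rceil-1}$ respectively), at the modest cost of having to handle the half-integer endpoints, which you do correctly.
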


\begin{proof}
    For a proof see \cite[Lemma~9]{zhabitskaya2009average-length}. 
    Notice that the formulae there are being proved for $U\notin\NN$, but they are readily seen hold for $U\in\NN$ as well.
\end{proof}

\begin{lem} \label{lem:Euler'sPhiAsymptotics}
    The following asymptotic formulae hold for any $x\geq 2$:
    \begin{enumerate}
        \item\label{eq:Euler'sPhiAsymptotics1}$\displaystyle
            \sum_{q< x}\varphi(q)=\frac{x^2}{2\zeta(2)}+O(x\log x)
        $,
        \item\label{eq:Euler'sPhiAsymptotics2}$\displaystyle
            \sum_{q< x}\frac{\varphi(q)}{q}=\frac{x}{\zeta(2)}+O(\log x)
        $,
        \item\label{eq:Euler'sPhiAsymptotics3}$\displaystyle
            \sum_{q< x}\frac{\varphi(q)}{q^2}=\frac{1}{\zeta(2)}\parentheses*{\log x+\gamma-\frac{\zeta'(2)}{\zeta(2)}}+O\parentheses*{\frac{\log x}{x}}
        $,
        \item\label{eq:Euler'sPhiAsymptotics4}$\displaystyle
            \sum_{q< x}\frac{\varphi(q)}{q^2}\log q=\frac{\parentheses*{\log x}^2}{2\zeta(2)}+O(1)
        $.
    \end{enumerate}
\end{lem}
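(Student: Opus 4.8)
The plan is to reduce all four estimates to elementary partial sums via the Dirichlet convolution identity $\varphi = \mu * \id$, that is, $\varphi(q) = \sum_{d\mid q}\mu(d)(q/d)$, together with its rescaled form $\varphi(q)/q = \sum_{d\mid q}\mu(d)/d$. Two classical auxiliary facts will be used repeatedly: first, $\sum_{d<x}\mu(d)/d^2 = 1/\zeta(2) + \LandauO(1/x)$, the tail being bounded by $\sum_{d>x}d^{-2} \ll 1/x$; second, $\sum_{d\geq 1}\mu(d)(\log d)/d^2 = \zeta'(2)/\zeta(2)^2$, obtained by differentiating the identity $\sum_{d\geq1}\mu(d)d^{-s} = 1/\zeta(s)$ and setting $s=2$, the corresponding truncated sum differing from it by $\LandauO((\log x)/x)$.

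For \cref{eq:Euler'sPhiAsymptotics1} I would write $q = de$ and interchange the order of summation, so that $\sum_{q<x}\varphi(q) = \sum_{d<x}\mu(d)\sum_{e<x/d}e$; inserting $\sum_{e<y}e = \tfrac12 y^2 + \LandauO(y)$ and using $\sum_{d<x}1/d \ll \log x$ gives
\[
	\sum_{q<x}\varphi(q)
	= \frac{x^2}{2}\sum_{d<x}\frac{\mu(d)}{d^2} + \LandauO(x\log x)
	= \frac{x^2}{2\zeta(2)} + \LandauO(x\log x).
\]
The same manipulation applied to $\varphi(q)/q = \sum_{d\mid q}\mu(d)/d$, now using that there are $x/d + \LandauO(1)$ multiples of $d$ below $x$, yields \cref{eq:Euler'sPhiAsymptotics2}.

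The one estimate needing a little more care is \cref{eq:Euler'sPhiAsymptotics3}, where the secondary constant must be obtained exactly. Using $\varphi(q)/q^2 = \sum_{de=q}\mu(d)/(d^2 e)$ together with $\sum_{e<y}1/e = \log y + \gamma + \LandauO(1/y)$,
\begin{align*}
	\sum_{q<x}\frac{\varphi(q)}{q^2}
	&= \sum_{d<x}\frac{\mu(d)}{d^2}\parentheses*{\log\tfrac{x}{d} + \gamma + \LandauO(d/x)} \\
	&= (\log x + \gamma)\sum_{d<x}\frac{\mu(d)}{d^2} - \sum_{d<x}\frac{\mu(d)\log d}{d^2} + \LandauO\parentheses*{\tfrac{\log x}{x}}.
\end{align*}
Extending the two $d$-sums to infinity (at a cost of $\LandauO((\log x)/x)$ each) and substituting the two auxiliary facts above produces $\zeta(2)^{-1}(\log x + \gamma) - \zeta'(2)/\zeta(2)^2 + \LandauO((\log x)/x)$, which is exactly \cref{eq:Euler'sPhiAsymptotics3}. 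Note that deducing \cref{eq:Euler'sPhiAsymptotics3} from \cref{eq:Euler'sPhiAsymptotics2} by partial summation would not capture this constant, so the direct convolution is genuinely needed here.

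Finally, \cref{eq:Euler'sPhiAsymptotics4} follows from \cref{eq:Euler'sPhiAsymptotics3} by partial summation: writing $S(t) = \sum_{q<t}\varphi(q)/q^2 = \zeta(2)^{-1}\log t + c + \LandauO((\log t)/t)$ with $c = \zeta(2)^{-1}(\gamma - \zeta'(2)/\zeta(2))$ and integrating by parts (there is no boundary term at $t=1$ since $\log 1 = 0$),
\begin{align*}
	\sum_{q<x}\frac{\varphi(q)\log q}{q^2}
	&= S(x)\log x - \int_1^x \frac{S(t)}{t}\dd{t} \\
	&= \parentheses*{\frac{(\log x)^2}{\zeta(2)} + c\log x} - \parentheses*{\frac{(\log x)^2}{2\zeta(2)} + c\log x} + \LandauO(1) \\
	&= \frac{(\log x)^2}{2\zeta(2)} + \LandauO(1),
\end{align*}
where the $\LandauO(1)$ absorbs the $\LandauO((\log x)^2/x)$ from $S(x)\log x$ and the contribution $\int_1^x \LandauO((\log t)/t^2)\dd{t} = \LandauO(1)$ coming from the error part of $S(t)$ in the integral (the relevant integral being convergent). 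The whole argument is routine bookkeeping; the only genuine point of attention is the exact evaluation of the constant term in \cref{eq:Euler'sPhiAsymptotics3} — which is what dictates the direct convolution computation there — and, correspondingly, checking that the $\log x$-contributions cancel in \cref{eq:Euler'sPhiAsymptotics4}.
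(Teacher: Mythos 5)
Your proof is correct, and it follows the standard route: the paper itself only cites the literature for parts (1)--(3) (the cited sources use exactly this Dirichlet-convolution computation with $\varphi=\mu*\id$ and the evaluation $\sum_{d\geq1}\mu(d)(\log d)d^{-2}=\zeta'(2)/\zeta(2)^2$) and, like you, deduces part (4) from part (3) by partial summation. All the error-term bookkeeping in your write-up checks out, so this is a sound self-contained version of what the paper delegates to references.
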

\begin{proof}
    The first two formulae are well known and the proof of the third one can be found in \cite[Corollary~4.5]{boca2007products-of-matrices}.
    The last formula can be deduced easily from \cref{eq:Euler'sPhiAsymptotics3} and \cref{eq:Abel'sSummation}.
\end{proof}

\begin{lem}\label{lem:SomeMoreAsymptoticFormulae}
	The following asymptotic formulae hold for any $U\geq2$:
	\begin{enumerate}
		\item$\displaystyle
			\mathop{\sum_k\sum_q}_{k+q<U}\frac{\varphi(q)}{kq^2}
			= \frac{\parentheses*{\log U}^2}{\zeta(2)} + \frac{\log U}{\zeta(2)}\parentheses*{2\gamma-\frac{\zeta'(2)}{\zeta(2)}}+O(1)
		$,
		\item$\displaystyle
			\mathop{\sum_k\sum_q}_{k+q<U}\frac{\varphi(q)}{q^2}
			= \frac{U\log U}{\zeta(2)} + O(U)
			= \mathop{\sum_k\sum_q}_{k+q<U}\frac{\varphi(q)}{qk}
		$,
		\item$\displaystyle
			\mathop{\sum_k\sum_q}_{k+q<U}\frac{\varphi(q)k}{q^2}
			= \frac{U^2\log U}{2\zeta(2)} + O\parentheses*{U^2}
			= \mathop{\sum_k\sum_q}_{k+q<U}\frac{\varphi(q)}{k}
		$.
	\end{enumerate}
\end{lem}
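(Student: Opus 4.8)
The plan is to prove all three identities by the same elementary method used for \cref{lem:AsymptoticFormulae}: interchange the order of summation, evaluate the inner sum in closed form using \cref{lem:Euler'sPhiAsymptotics} together with the standard estimates $\sum_{1\le k<x}k^{-1}=\log x+\gamma+\LandauO(1/x)$, $\sum_{1\le k<x}1=x+\LandauO(1)$ and $\sum_{1\le k<x}k=\tfrac12x^2+\LandauO(x)$, and then sum over the remaining variable. In each case I would fix the variable making the inner sum trivial: for fixed $q$ the admissible $k$ satisfy $1\le k<U-q$, and for fixed $k$ the admissible $q$ satisfy $1\le q<U-k$; the $\LandauO(1)$ boundary terms with $U-q<2$ (resp.\ $U-k<2$) contribute at most $\LandauO(U)$ in every case and may be discarded. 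No Abel summation is needed—only the cited estimates and crude tail bounds.

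For the second formula I would sum over $k$ first, so that the left side equals $U\sum_{q<U}\varphi(q)q^{-2}-\sum_{q<U}\varphi(q)q^{-1}+\LandauO\parentheses*{\sum_{q<U}\varphi(q)q^{-2}}$, and \cref{lem:Euler'sPhiAsymptotics} converts this into $U\log U/\zeta(2)+\LandauO(U)$; for the equivalent expression I sum over $q$ first, obtaining $\zeta(2)^{-1}\sum_{k<U}(U-k)/k+\LandauO\parentheses*{\sum_{k<U}\log(U-k)/k}$, whose main part is $U\sum_{k<U}k^{-1}-\sum_{k<U}1=U\log U+\LandauO(U)$. The third formula is handled identically, now using $\sum_{k<U-q}k=\tfrac12(U-q)^2+\LandauO(U-q)$ and $\sum_{q<U-k}\varphi(q)=(U-k)^2/(2\zeta(2))+\LandauO((U-k)\log(U-k))$ respectively: expanding $(U-q)^2=U^2-2Uq+q^2$ (resp.\ $(U-k)^2=U^2-2Uk+k^2$) and applying \cref{lem:Euler'sPhiAsymptotics} (resp.\ the three elementary $k$-sums) leaves $U^2\log U/(2\zeta(2))$ as the only term not absorbed into $\LandauO(U^2)$. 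In both of these formulae the weighted harmonic sums that arise, such as $\sum_{k<U}\log(U-k)/k$, occur only in error terms and need only be controlled in order of magnitude.

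The first formula is the delicate one, since it must be resolved with error $\LandauO(1)$, and in particular the coefficient $2\gamma$ must be recovered. Summing over $q$ first and invoking \cref{lem:Euler'sPhiAsymptotics} with $x=U-k$ gives
\[
	\mathop{\sum_k\sum_q}_{k+q<U}\frac{\varphi(q)}{kq^2}
	= \frac{1}{\zeta(2)}\sum_{k<U}\frac{\log(U-k)}{k}
	+ \frac{\gamma-\zeta'(2)/\zeta(2)}{\zeta(2)}\sum_{k<U}\frac{1}{k}
	+ \LandauO\parentheses[\Big]{\sum_{k<U}\frac{\log(U-k)}{k(U-k)}}+\LandauO(1),
\]
and the displayed error sum is $\LandauO(1)$ after splitting the range of $k$ at $U/2$. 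The crux is then the elementary identity
\[
	\sum_{k<U}\frac{\log(U-k)}{k}=(\log U)^2+\gamma\log U+\LandauO(1),
\]
which I would establish by writing $\log(U-k)=\log U+\log(1-k/U)$: the first piece yields $\log U\,(\log U+\gamma)+\LandauO(1)$, while $\sum_{k<U}\log(1-k/U)/k=\LandauO(1)$, using $\abs{\log(1-k/U)}\ll k/U$ for $k\le U/2$ and $1/k\ll1/U$ together with $\sum_{U/2<k<U}\abs{\log(1-k/U)}\ll U$ for $k>U/2$. Combined with $\sum_{k<U}k^{-1}=\log U+\gamma+\LandauO(1/U)$, this produces $\frac{(\log U)^2}{\zeta(2)}+\frac{2\gamma-\zeta'(2)/\zeta(2)}{\zeta(2)}\log U+\LandauO(1)$, as claimed; the doubled $\gamma$ is exactly the $\gamma$ from the inner $k$-summation plus the $\gamma$ hidden inside the asymptotic for $\sum_{q<x}\varphi(q)q^{-2}$. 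The main obstacle is precisely this bookkeeping of the constant term in the first formula—in particular the double-logarithm identity above; the other two formulae need only order-of-magnitude estimates.
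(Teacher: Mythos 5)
Your proposal is correct and follows exactly the route the paper intends: the paper's proof is a one-line appeal to \cref{lem:Euler'sPhiAsymptotics} and the truncated harmonic sum, and your write-up fills in precisely those details (interchange of summation, the expansion $\log(U-k)=\log U+\log(1-k/U)$, and the bookkeeping that produces the coefficient $2\gamma$ in the first formula). All three computations check out, including the $\LandauO(1)$ control of the error sums in formula~(1).
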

\begin{proof}
	They follow directly from the formulae of \cref{lem:Euler'sPhiAsymptotics} and the asymptotic formula of the truncated harmonic sum.
\end{proof}


\begin{thebibliography}{30}
	\bibitem[{Baladi} and {Vall\'{e}e}(2005)]{BaladiVallee}
	V.~{Baladi} and B.~{Vall\'{e}e}.
	\newblock {Euclidean algorithms are {G}aussian}.
	\newblock \emph{J.\ Number Theory}, 110\penalty0 (2):\penalty0 331--386, 2005.
	
	\bibitem[{Barkan}(1977)]{Barkan}
	{\relax Ph}.~{Barkan}.
	\newblock {Sur les sommes de {D}edekind et les fractions continues finies}.
	\newblock \emph{C.~R.~Acad.\ Sci.\ Paris S\'{e}r.~A-B}, 284\penalty0 (16):\penalty0 A923--A926, 1977.
	
	\bibitem[{Boca}(2007)]{boca2007products-of-matrices}
	F.~P. {Boca}.
	\newblock {Products of matrices \(\brackets{\begin{smallmatrix}1&1\\0&1\end{smallmatrix}}\) and \(\brackets{\begin{smallmatrix}1&0\\1&1\end{smallmatrix}}\) and the distribution of reduced quadratic irrationals}.
	\newblock \emph{{J.\ reine angew.\ Math.}}, 606:\penalty0 149--165, 2007.
	
	\bibitem[{Boca} et~al.(2000){Boca}, {Cobeli}, and
	  {Zaharescu}]{boca2000distribution-lattice-pts}
	F.~P. {Boca}, C.~{Cobeli}, and A.~{Zaharescu}.
	\newblock {Distribution of lattice points visible from the origin}.
	\newblock \emph{{Commun.\ Math.\ Phys.}}, 213\penalty0 (2):\penalty0 433--470, 2000.
	
	\bibitem[{Bykovski\u{\i{}}}(2005)]{Bykovski}
	V.~A. {Bykovski\u{\i{}}}.
	\newblock {An estimate for the dispersion of lengths of finite continued fractions}.
	\newblock \emph{Fundam.\ Prikl.\ Mat.}, 11\penalty0 (6):\penalty0 15--26, 2005.
	
	\bibitem[{Bykovski\u{\i{}}} and {Frolenkov}(2017)]{Frolenkov2}
	V.~A. {Bykovski\u{\i{}}} and D.~A. {Frolenkov}.
	\newblock {The average length of finite continued fractions with fixed denominator}.
	\newblock \emph{Sb.\ Math.}, 208\penalty0 (5):\penalty0 644--683, 2017.
	
	\bibitem[{Dedekind}(1953)]{Dedekind}
	R.~{Dedekind}.
	\newblock {Erl\"{a}uterung zu den vorstehenden Fragmenten [XXVII]}.
	\newblock In \emph{Bernhard Riemanns gesammelte mathematische Werke}. Dover, {New York}, 1953.
	
	\bibitem[{Dixon}(1970)]{dixon1970number-of-steps}
	J.~D. {Dixon}.
	\newblock {The number of steps in the Euclidean algorithm}.
	\newblock \emph{{J.\ Number Theory}}, 2:\penalty0 414--422, 1970.
	
	\bibitem[{Dixon}(1971)]{Dixonasimple}
	J.~D. {Dixon}.
	\newblock {A simple estimate for the number of steps in the {E}uclidean algorithm}.
	\newblock \emph{Amer.\ Math.\ Monthly}, 78:\penalty0 374--376, 1971.
	\newblock ISSN 0002-9890.
	
	\bibitem[{Frolenkov}(2012)]{Frolenkov}
	D.~A. {Frolenkov}.
	\newblock {Asymptotic behavior of the first moment for the number of steps in {E}uclid's excess and deficiency algorithm}.
	\newblock \emph{Sb.\ Mat.}, 203\penalty0 (2):\penalty0 143--160, 2012.
	
	\bibitem[{Girstmair}(2018)]{girstmair2018on-the-distribution}
	K.~{Girstmair}.
	\newblock {On the distribution of Dedekind sums}.
	\newblock \emph{{Surv.\ Math.\ Appl.}}, 13:\penalty0 251--263, 2018.
	
	\bibitem[{Heilbronn}(1968)]{heilbronn1968average-length}
	H.~{Heilbronn}.
	\newblock {On the average length of a class of finite continued fractions}.
	\newblock In P.~{Tur\'an}, editor, \emph{Abh.\ Zahlentheorie Anal., zur Erinnerung an E.~Landau}, pages 87--96. Plenum, {New York}, 1968.
	
	\bibitem[{Hensley}(1994)]{hensley1994number-of-steps}
	D.~{Hensley}.
	\newblock {The number of steps in the Euclidean algorithm}.
	\newblock \emph{{J.\ Number Theory}}, 49\penalty0 (2):\penalty0 142--182, 1994.
	
	\bibitem[{Hickerson}(1977)]{hickerson1977continued-fractions}
	D.~{Hickerson}.
	\newblock {Continued fractions and density results for Dedekind sums}.
	\newblock \emph{{J.\ reine angew.\ Math.}}, 290:\penalty0 113--116, 1977.
	
	\bibitem[{Ito}(2004)]{ito2004density-result}
	H.~{Ito}.
	\newblock {A density result for elliptic Dedekind sums}.
	\newblock \emph{Acta Arith.}, 112\penalty0 (2):\penalty0 199--208, 2004.
	
	\bibitem[{Knuth}(1981)]{knuth1998art-of-programming-2}
	D.~E. {Knuth}.
	\newblock \emph{The art of computer programming. Vol.~2: Seminumerical algorithms}.
	\newblock Addison-Wesley, London, 2nd edition, 1981.
	
	\bibitem[{Lochs}(1961)]{lochs1961statistik}
	G.~{Lochs}.
	\newblock {Statistik der Teilnenner der zu den echten Br\"uchen geh\"origen regelm\"a{\ss}igen Kettenbr\"uche}.
	\newblock \emph{Monatsh.\ Math.}, 65:\penalty0 27--52, 1961.
	
	\bibitem[{Minelli}(2022)]{minelli2022thesis}
	P.~{Minelli}.
	\newblock \emph{{On Diophantine approximation, a conjecture of Ito on Dedekind sums and Poissonian pair correlation of sequences}}.
	\newblock Doctoral dissertation, Graz University of Technology, 2022.
	
	\bibitem[{Myerson}(1987)]{myerson1987semi-regular}
	G.~{Myerson}.
	\newblock {On semi-regular finite continued fractions}.
	\newblock \emph{{Arch.\ Math.}}, 48:\penalty0 420--425, 1987.
	
	\bibitem[{Perron}(1954)]{Perron}
	O.~{Perron}.
	\newblock \emph{Die {L}ehre von den {K}ettenbr\"{u}chen. Band~{I}. {E}lementare {K}ettenbr\"{u}che}.
	\newblock B.~G.~Teubner Verlagsgesellschaft, {Stuttgart}, 1954.
	
	\bibitem[{Porter}(1975)]{Porter}
	J.~W. {Porter}.
	\newblock {On a theorem of Heilbronn}.
	\newblock \emph{Mathematika}, 22\penalty0 (1):\penalty0 20--28, 1975.
	
	\bibitem[{Rademacher} and {Grosswald}(1972)]{RademacherGrosswald}
	H.~{Rademacher} and E.~{Grosswald}.
	\newblock \emph{Dedekind sums}.
	\newblock AMS, Washington, D.C., 1972.
	\newblock The Carus Mathematical Monographs, No.~16.
	
	\bibitem[{Shparlinski}(2012)]{Shparmodularhyperbolas}
	I.~E. {Shparlinski}.
	\newblock {Modular hyperbolas}.
	\newblock \emph{Jpn.\ J.\ Math.}, 7\penalty0 (2):\penalty0 235--294, 2012.
	
	\bibitem[{Ustinov}(2005)]{Ustinovonthestatistical}
	A.~V. {Ustinov}.
	\newblock {On the statistical properties of finite continued fractions}.
	\newblock \emph{J.\ Math.\ Sci., New York}, 137\penalty0 (2):\penalty0 186--211, 2005.
	
	\bibitem[{Ustinov}(2007)]{Ustinovcalculation}
	A.~V. {Ustinov}.
	\newblock {Calculation of variance in a problem from the theory of continued fractions}.
	\newblock \emph{Mat.\ Sb.}, 198\penalty0 (6):\penalty0 139--158, 2007.
	
	\bibitem[{Ustinov}(2008)]{Ustinovasymptotic}
	A.~V. {Ustinov}.
	\newblock {Asymptotic behavior of the first and second moments for the number of steps in the {E}uclidean algorithm}.
	\newblock \emph{Izv.\ Ross.\ Akad.\ Nauk Ser.\ Mat.}, 72\penalty0 (5):\penalty0 189--224, 2008.
	
	\bibitem[{Vall\'{e}e}(2000)]{vallee2000a-unifying-framework}
	B.~{Vall\'{e}e}.
	\newblock {A unifying framework for the analysis of a class of Euclidean algorithms}.
	\newblock In G.~H. {Gonnet}, D.~{Panario}, and A.~{Viola}, editors, \emph{LATIN~2000: Theoretical informatics. 4th~Latin American symposium, Punta del Este, Uruguay, April 10--14, 2000. Proceedings}, pages 343--354. Springer, {Berlin}, 2000.
	\newblock ISBN 3-540-67306-7.
	
	\bibitem[{Vall\'{e}e}(2003)]{Valleedynamical}
	B.~{Vall\'{e}e}.
	\newblock {Dynamical analysis of a class of {E}uclidean algorithms}.
	\newblock \emph{Theoret.\ Comput.\ Sci.}, 297\penalty0 (1-3):\penalty0 447--486, 2003.
	
	\bibitem[{Zhabitskaya}(2009)]{zhabitskaya2009average-length}
	E.~N. {Zhabitskaya}.
	\newblock {The average length of reduced regular continued fractions}.
	\newblock \emph{{Sb.\ Math.}}, 200\penalty0 (8):\penalty0 1181--1214, 2009.
	
	\bibitem[{Zhabitskaya}(2011)]{zhabitskaya2011mean-value}
	E.~N. {Zhabitskaya}.
	\newblock {Mean value of sums of partial quotients of continued fractions}.
	\newblock \emph{{Math.\ Notes}}, 89\penalty0 (3):\penalty0 450--454, 2011.
\end{thebibliography}

\vfill%

\end{document}